\documentclass[11pt,reqno]{amsart} 


\usepackage{graphicx}

\usepackage[top=1in,bottom=1in,left=1in,right=1in]{geometry}

\usepackage{xcolor}

\usepackage{enumitem}

\usepackage[labelfont=bf,font=md]{caption}

\usepackage[font={sf,small},labelfont=md]{subfig}


\usepackage[noadjust,nosort]{cite}


\usepackage{amsmath, amsthm, amssymb}
\usepackage{enumerate}
\usepackage{tikz}
\usepackage{mathtools}
\usepackage{array}

\usepackage{polynom}

\usepackage{multicol}

\usepackage{dsfont}
	\newcommand{\one}{\mathds{1}}

\usepackage{framed}


\usepackage{hyperref}
	\hypersetup{colorlinks=true,linkcolor=blue,citecolor=blue,urlcolor=black}
	
\allowdisplaybreaks

\numberwithin{equation}{section}

\makeatletter
\setlength{\@fptop}{0pt plus 1fil}
\setlength{\@fpbot}{0pt plus 1fil}
\makeatother






\newcommand{\eq}[1]{\begin{align*} #1 \end{align*}}
\newcommand{\eeq}[1]{\begin{align} \begin{split} #1 \end{split} \end{align}}

\newcommand{\E}{\mathbb{E}}
\renewcommand{\P}{\mathbb{P}}
\newcommand{\R}{\mathbb{R}}
\newcommand{\Z}{\mathbb{Z}}

\renewcommand{\b}{\mathcal{B}}
\renewcommand{\SS}{\mathcal{S}}
\newcommand{\f}{\mathcal{F}}
\renewcommand{\l}{\mathcal{L}}

\newtheorem{thm}{Theorem}[section]

\newtheorem{cor}[thm]{Corollary}
\newtheorem{lemma}[thm]{Lemma}

\theoremstyle{definition}
\newtheorem{defn}[thm]{Definition}

\newtheorem{remark}[thm]{Remark}

\def\eps{\varepsilon}


\newcommand{\vc}[1]{{\boldsymbol #1}}

\newcommand{\wt}[1]{\widetilde{#1}}

\DeclareMathOperator{\Var}{Var}


\DeclareMathOperator*{\esssup}{ess\,sup}
\DeclareMathOperator*{\essinf}{ess\,inf}

\newcommand{\givenp}[3][]{#1( #2 \: #1| \: #3 #1)} 




\DeclareMathOperator{\e}{e}

\DeclareMathOperator{\tv}{TV}

\newcommand{\dd}{\mathrm{d}}

\DeclarePairedDelimiter\ceil{\lceil}{\rceil}
\DeclarePairedDelimiter\floor{\lfloor}{\rfloor}


\DeclareMathOperator*{\argmin}{arg\,min} 


\renewcommand{\thefootnote}{\fnsymbol{footnote}}

\title{Fluctuation lower bounds in planar random growth models}

\subjclass[2010]{60E15, 
60K35, 
82D60, 
60K37 
}

\keywords{first-passage percolation, corner growth model, directed polymers}

\author{Erik Bates}
\thanks{Erik Bates's research was partially supported by NSF grant DGE-114747} 
\address{\newline Department of Mathematics \newline University of California, Berkeley \newline 1067 Evans Hall \newline Berkeley, CA 94720-3840 \newline \textup{\tt ewbates@berkeley.edu}}

\author{Sourav Chatterjee}
\thanks{Sourav Chatterjee's research was partially supported by NSF grant DMS-1608249}
\address{\newline Department of Statistics \newline Stanford University\newline Sequoia Hall, 390 Jane Stanford Way \newline Stanford, CA 94305-4020\newline \textup{\tt souravc@stanford.edu}}



\begin{document}
\bibliographystyle{acm}

\renewcommand{\thefootnote}{\arabic{footnote}} \setcounter{footnote}{0}

\begin{abstract}
We prove $\sqrt{\log n}$ lower bounds on the order of growth fluctuations in three planar growth models (first-passage percolation, last-passage percolation, and directed polymers) under no assumptions on the distribution of vertex or edge weights other than the minimum conditions required for avoiding pathologies. 
Such bounds were previously known only for certain restrictive classes of distributions.
In addition, the first-passage shape fluctuation exponent is shown to be at least $1/8$, extending previous results to more general distributions. 
\end{abstract}

\maketitle

\section{Introduction}
Even after years of study on random growth models, such as first- and last-passage percolation and directed polymers, much remains mysterious or out of reach technically.
For instance, beyond the fundamental shape theorems guaranteeing linear growth rates for the passage times/free energy, there are sublinear fluctuations whose asymptotics are not established.
Even in the planar setting, for which the conjectural picture is clear, general tools are far from making it rigorous.
This is in stark contrast with integrable models, for which fluctuation exponents are only a fraction of what has been proved.
In this paper we consider three widely studied random growth models: first-passage percolation (FPP), last-passage percolation (LPP), and directed polymers in random environment.
While the models differ in how growth is measured, they each possess a law of large numbers that says the rate of growth is asymptotically linear.
More mysterious, however, are the sublinear fluctuations.
In their two-dimensional versions, these models are believed to belong to the Kardar--Parisi--Zhang universality class \cite{corwin12}, and in particular that growth fluctuations are of order $n^{1/3}$.
Except in exceptional cases of LPP and directed polymers having exact solvability properties, rigorous results are far from this goal, or in some cases non-existent.

The goal of this article is two-fold.
First, we describe a general strategy for proving lower bounds on the order of fluctuations for a sequence of random variables (defined precisely in Definition \ref{fluctuations_def}). The approach is an adaptation of techniques developed recently by the second author in \cite{chatterjee19II}. It is general in that it can be used in a wide variety of problems consisting of i.i.d.~random variables, where no assumptions are made on the common distribution of these variables.
Second, we apply the method to study fluctuations in the growth of planar FPP, LPP, and directed polymers.
In all three cases, we are able to prove a lower bound of order $\sqrt{\log n}$ fluctuations.
In addition, for FPP we extend the shape fluctuation lower bound of $n^{1/8-\delta}$ to almost all distributions for which it should be true.
Although still far from $n^{1/3}$, which by all accounts is the correct order (e.g.~see \cite{sosoe18} and references therein), our results require almost no assumptions on the underlying weight distribution.

The paper is structured as follows.
The general method mentioned above for establishing fluctuation lower bounds is outlined in Section \ref{fluctuations_sec}, and some necessary lemmas are proved.
The random growth models under consideration are introduced in Section \ref{results}, where the main results are also stated.
Finally, Section \ref{proofs} sees the method put into action to prove these results.

\section{General method for lower bounds on fluctuations} \label{fluctuations_sec}

\subsection{Definitions}

Let us begin by precisely stating what is meant by a lower bound on fluctuations.

\begin{defn} \label{fluctuations_def}
Let $(X_n)_{n\geq1}$ be a sequence of random variables, and let $(\delta_n)_{n\geq1}$ be a sequence of positive real numbers.
We will say that $X_n$ has fluctuations of order at least $\delta_n$ if there are positive constants $c_1$ and $c_2$ such that for all large $n$, and for all $-\infty < a \leq b < \infty$ with $b-a \leq c_1\delta_n$, one has $\P(a\leq X_n\leq b) \leq 1 - c_2$.
\end{defn}

In other words, fluctuations are of order at least $\delta_n$ if no sequence of intervals $I_n$ of length $o(\delta_n)$ satisfies $\P(X_n\in I_n) \to 1$.
Note that if fluctuations are at least of order $\delta_n$, then so is $\sqrt{\Var(X_n)}$.
The converse, however, is not true in general, necessitating alternative approaches even when a lower bound on variance is known.
On the other hand, if a variance lower bound is accompanied by an upper bound of the same order, then fluctuations must be of that order.
One can see this from a second moment argument, for instance using the Paley--Zygmund inequality.
In the absence of matching variance bounds, one must work with Definition \ref{fluctuations_def} directly.
For this reason, the following simple lemma is useful.

\begin{lemma}[{\cite[Lemma 1.2]{chatterjee19II}}] \label{fluctuation_lemma}
Let $X$ and $Y$ be random variables defined on the same probability space.
For any $-\infty < a \leq b < \infty$,
\eq{
\P(a \leq X \leq b) \leq \frac{1}{2}\Big(1 + \P(|X-Y| \leq b-a) + d_{\tv}(\l_X,\l_{Y})\Big),
} 
where $\l_X$ and $\l_Y$ denote the laws of $X$ and $Y$, respectively.
\end{lemma}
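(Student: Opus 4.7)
The plan is to relate the two quantities $\P(a \leq X \leq b)$ and $\P(a \leq Y \leq b)$ in two complementary ways, and then combine them.

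First I would exploit the simple geometric observation that if $X$ and $Y$ both fall in the interval $[a,b]$, then necessarily $|X-Y| \leq b-a$. Writing $A = \{a \leq X \leq b\}$ and $B = \{a \leq Y \leq b\}$, this means $\P(A \cap B) \leq \P(|X-Y| \leq b-a)$. Combining with the trivial identity $\P(A) + \P(B) = \P(A \cap B) + \P(A \cup B)$ and the bound $\P(A \cup B) \leq 1$, I get $\P(A) + \P(B) \leq 1 + \P(|X-Y| \leq b-a)$.

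Second, I would invoke the definition of total variation distance: since $d_{\tv}(\l_X, \l_Y) = \sup_C |\P(X \in C) - \P(Y \in C)|$ with the supremum taken over Borel sets $C$, it follows in particular that $\P(B) \geq \P(A) - d_{\tv}(\l_X, \l_Y)$. Substituting this lower bound on $\P(B)$ into the previous display and dividing by two gives exactly the stated inequality.

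I do not anticipate any genuine obstacle — the argument is a two-line combination of an inclusion-exclusion bound with the defining property of the total variation distance. The conceptual takeaway is more important than the calculation: to rule out $X$ concentrating on a short interval, it suffices to exhibit an auxiliary $Y$ whose law is close in total variation to that of $X$ yet whose value differs from $X$ on a scale larger than the interval with nontrivial probability. This is the mechanism that will drive the fluctuation lower bounds of the sections to follow.
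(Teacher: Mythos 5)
Your argument is correct, and it is essentially the same proof as in the cited source \cite[Lemma 1.2]{chatterjee19II} (the present paper only quotes the lemma without reproducing a proof): bound $\P(A)+\P(B)$ by $1+\P(|X-Y|\leq b-a)$ via the intersection, and bound $\P(A)-\P(B)$ by $d_{\tv}(\l_X,\l_Y)$ using the set $[a,b]$ in the definition of total variation. Nothing further is needed.
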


Here $d_{\tv}(\nu_1,\nu_2)$ is the total variation distance between probability measures $\nu_1,\nu_2$ on the same measurable space $(\Omega,\f)$, defined as
\eq{
d_{\tv}(\nu_1,\nu_2) \coloneqq \sup_{A \in \f} |\nu_1(A) - \nu_2(A)|.
}
It can be related to Hellinger affinity between $\mu$ and $\wt\mu$,
\eeq{ \label{hellinger_def}
\rho(\nu_1,\nu_2) \coloneqq \int_\Omega \sqrt{fg}\ \dd\nu_0,
}
where $\nu_0$ is any probability measure on $(\Omega,\f)$ with respect to which both $\nu_1$ and $\nu_2$ are absolutely continuous, and $f$ and $g$ are their respective densities.
Since
\eq{
d_{\tv}(\nu_1,\nu_2) = \frac{1}{2}\int_\Omega |f-g|\ \dd\nu_0,
}
the following upper bound follows from the Cauchy--Schwarz inequality:
\eeq{ \label{tv_hellinger}
d_{\tv}(\nu_1,\nu_2) \leq \sqrt{1 - \rho(\nu_1,\nu_2)^2}.
}

\subsection{The general method}
To produce a lower bound on the order of fluctuations using Lemma \ref{fluctuation_lemma}, the basic idea is to introduce a coupling $(X,Y)$ such that $|X-Y|$ is large with substantial probability while $d_{\tv}(\l_X,\l_Y)$ is small.
A general approach formalizing this idea was initiated in \cite{chatterjee19II}, in which the couplings are obtained from multiplicative perturbations inspired by the Mermin--Wagner theorem of statistical mechanics~\cite{mermin-wagner66}.
Such couplings only work, however, for a certain class of random variables, namely those with \eeq{ \label{previous_assumption}
&\text{density proportional to $\e^{-V}$, where $V \in C^\infty(\R)$, such that} \\
&\text{$V$ and its derivatives of all orders have at most polynomial growth, and} \\
&\text{$\e^V$ grows faster than any polynomial.}
}
We now propose a different type of coupling that allows for the approach of \cite{chatterjee19II} to be extended to any distribution.
Although the couplings we will use to prove the main theorems of this paper are more specific, we present here the most general setup in hopes that the method might be useful in other settings.

Consider a real-valued random variable $X$ defined on some probability space $(\Omega,\f,\P)$.
Let $\l_X$ denote the law of $X$.
Suppose $X'$ is another random variable defined on the same probability space, such that
$\l_{X'}$ is absolutely continuous with respect to $\l_X$ and has bounded density.
Given $\eps \in (0,1)$, let $Y$ be a Bernoulli($\eps$) random variable independent of $X$ and $X'$.
Finally, set
\eeq{ \label{new_X}
\wt X = \begin{cases}
X' &\text{if }Y=1, \\
X &\text{if }Y=0.
\end{cases}
}

\begin{lemma} \label{hellinger_lemma}
The Hellinger affinity between $\l_X$ and $\l_{\wt X}$ satisfies the lower bound
\eq{
\rho(\l_X,\l_{\wt X}) \geq 1 - C\eps^2,
}
where $C$ is a constant depending only on $\l_X$ and $\l_{X'}$. 
\end{lemma}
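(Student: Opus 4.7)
The plan is to express the law of $\wt X$ explicitly as a mixture and then reduce the bound to an elementary Taylor-type inequality. Since $Y$ is Bernoulli($\eps$) and independent of $(X,X')$, unconditionally on $Y$ we have $\l_{\wt X} = (1-\eps)\l_X + \eps\l_{X'}$. By hypothesis, $\l_{X'}$ is absolutely continuous with respect to $\l_X$; let $h \coloneqq \dd\l_{X'}/\dd\l_X$, which is bounded by some constant $M$ depending only on $\l_X$ and $\l_{X'}$. Then $\l_{\wt X}$ is also absolutely continuous with respect to $\l_X$, with density
\eq{
\frac{\dd\l_{\wt X}}{\dd\l_X} = 1 - \eps + \eps h = 1 + \eps(h-1).
}
Substituting into the definition \eqref{hellinger_def} (with $\nu_0 = \l_X$, so that the density of $\l_X$ is identically $1$) yields
\eq{
\rho(\l_X,\l_{\wt X}) = \E_{\l_X}\bigl[\sqrt{1 + \eps(h(X)-1)}\bigr].
}

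Next I would use the elementary inequality $\sqrt{1+t} \geq 1 + t/2 - C_0 t^2$, which holds on any bounded interval $t \in [-1, M']$ with $C_0$ depending only on $M'$. Since $h(X) \in [0,M]$ and $\eps \in (0,1)$, the argument $\eps(h(X)-1)$ lies in $[-1, M-1]$, so the inequality applies pointwise. Taking expectations,
\eq{
\rho(\l_X,\l_{\wt X}) \geq 1 + \tfrac{\eps}{2}\E_{\l_X}[h(X)-1] - C_0\eps^2 \E_{\l_X}[(h(X)-1)^2].
}
The first-order term vanishes identically, because $\E_{\l_X}[h(X)] = \int h\ \dd\l_X = \l_{X'}(\R) = 1$. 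This cancellation is what drives the quadratic rate; it is the only "obstacle," and it is not really an obstacle since it follows from $h$ being a Radon--Nikodym derivative.

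Finally, the remainder satisfies $\E_{\l_X}[(h(X)-1)^2] \leq (M+1)^2$, a constant depending only on $\l_X$ and $\l_{X'}$. Setting $C \coloneqq C_0(M+1)^2$ gives $\rho(\l_X,\l_{\wt X}) \geq 1 - C\eps^2$, as claimed. Combined with \eqref{tv_hellinger}, this will later give $d_{\tv}(\l_X,\l_{\wt X}) \leq \sqrt{1-(1-C\eps^2)^2} = O(\eps)$, which is the small-total-variation ingredient needed when Lemma \ref{fluctuation_lemma} is applied to the couplings constructed later in the paper.
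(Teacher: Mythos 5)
Your proof is correct and follows essentially the same route as the paper's: write the density of $\l_{\wt X}$ with respect to $\l_X$ as $1+\eps(h-1)$, expand the square root to second order, and use $\int h\,\dd\l_X=1$ to cancel the linear term. The only (harmless) difference is technical: you invoke the pointwise inequality $\sqrt{1+t}\geq 1+\tfrac{t}{2}-C_0t^2$ on $[-1,M-1]$ (which indeed holds, even with $C_0=\tfrac12$ for all $t\geq-1$), whereas the paper uses a Taylor expansion valid for $\eps<1/M$ and then covers $\eps\geq 1/M$ trivially by enlarging the constant.
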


\begin{proof}
Let us denote the density of $\l_{X'}$ with respect to $\l_{X}$ by $f(t)$, which we assume to be bounded;
say $f(t) \leq M$.
It is easy to see that $\eps f(t) + 1-\eps$ is the density of $\l_{\wt X}$ with respect to $\l_{X}$, and so
\eq{
\rho(\l_X,\l_{\wt X}) = \int_\R \sqrt{\eps f(t) + 1-\eps}\ \l_X(\dd t).
}
For $\eps < 1/M$, we can write the Taylor expansion
\eq{
\sqrt{1-\eps[1-f(t)]} = 1 - \frac{\eps}{2}[1-f(t)] - \frac{\eps^2}{8}[1-f(t)]^2 + \eps^3r(t),
}
where $r(t)$ is bounded.
In fact, the entire right-hand side above is bounded, and so there is no problem in writing
\eq{
\rho(\l_X,\l_{\wt X}) &= \int_\R\Big(1 - \frac{\eps}{2}[1-f(t)] - \frac{\eps^2}{8}[1-f(t)]^2 + \eps^3r(t)\Big)\ \l_X(\dd t).
}
Using the fact that $\int_\R f(t)\, \l_X(\dd t) = 1$, we find
\eq{
\rho(\l_X,\l_{\wt X}) &= 1 - \frac{\eps^2}{8}\int_\R[1-f(t)]^2\ \l_X(\dd t) + O(\eps^3) \geq 1 - C\eps^2,
}
where $C$ depends only on $\l_X$ and $\l_{X'}$.
Replacing $C$ by $\max(C,M^2)$ allows the statement to also hold trivially for $\eps \geq 1/M$.
\end{proof}

When the same type of coupling is applied to several i.i.d.~variables, we get the following bound which can be used in Lemma \ref{fluctuation_lemma}.

\begin{lemma} \label{tv_lemma}
Let $X_1,\dots,X_n$ be i.i.d.~random variables with law $\l_X$, and $X_1',\dots,X_n'$ be $i.i.d.$~random variables with law $\l_{X'}$. 
Assume $\l_{X'}$ is absolutely continuous with respect to $\l_X$ with bounded density.
For each $i = 1,\dots,n$, let $Y_i$ be a $\mathrm{Bernoulli}(\eps_i)$ random variable independent of everything else, and define $\wt X_i$ as in \eqref{new_X} with $\eps = \eps_i$.
Then
\eq{
d_{\tv}(\l_{(X_1,\dots,X_n)},\l_{(\wt X_1,\dots,\wt X_n)}) \leq C\bigg(\sum_{i=1}^n \eps_i^2\bigg)^{1/2},
}
where $C$ is a constant depending only on $\l_X$ and $\l_{X'}$.
\end{lemma}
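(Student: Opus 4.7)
The plan is to exploit the product structure of both $\l_{(X_1,\dots,X_n)}$ and $\l_{(\wt X_1,\dots,\wt X_n)}$, combined with the single-coordinate Hellinger estimate of Lemma \ref{hellinger_lemma}. Since for each $i$ the triple $(X_i,X_i',Y_i)$ is independent of the others, the marginal law of $\wt X_i$ has density $\eps_i f + (1-\eps_i)$ with respect to $\l_X$ (where $f$ is the density of $\l_{X'}$ with respect to $\l_X$), and the joint law of the $\wt X_i$ is the product of these marginals. The joint law of the $X_i$ is of course also a product.

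The key tool is the tensorization of Hellinger affinity: if $\mu=\mu_1\otimes\cdots\otimes\mu_n$ and $\nu=\nu_1\otimes\cdots\otimes\nu_n$ are product measures, then $\rho(\mu,\nu)=\prod_{i=1}^n \rho(\mu_i,\nu_i)$, which follows directly from Fubini's theorem applied to the definition \eqref{hellinger_def}. Applying Lemma \ref{hellinger_lemma} to each coordinate yields
\eq{
\rho\bigl(\l_{(X_1,\dots,X_n)},\l_{(\wt X_1,\dots,\wt X_n)}\bigr) \;=\; \prod_{i=1}^n \rho(\l_X,\l_{\wt X_i}) \;\geq\; \prod_{i=1}^n (1 - C\eps_i^2),
}
with the same constant $C$ supplied by Lemma \ref{hellinger_lemma}.

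Next, I would invoke the elementary inequality $\prod_i (1-a_i) \geq 1 - \sum_i a_i$ valid for $a_i\in[0,1]$ (by induction on $n$) to conclude $\rho \geq 1 - C\sum_{i=1}^n \eps_i^2$. If $C\sum_i \eps_i^2 > 1$, the statement of the lemma is trivial because $d_{\tv}\leq 1$, so we may assume this sum is at most $1$ when invoking the inequality. Plugging into the bound \eqref{tv_hellinger} and using $1-\rho^2 = (1-\rho)(1+\rho) \leq 2(1-\rho)$, we obtain
\eq{
d_{\tv}\bigl(\l_{(X_1,\dots,X_n)},\l_{(\wt X_1,\dots,\wt X_n)}\bigr) \;\leq\; \sqrt{1-\rho^2} \;\leq\; \sqrt{2C}\,\bigg(\sum_{i=1}^n \eps_i^2\bigg)^{1/2},
}
which is the desired bound after adjusting the constant.

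There is no real obstacle here; the proof is essentially a bookkeeping exercise chaining Lemma \ref{hellinger_lemma}, tensorization of $\rho$, Bernoulli's inequality, and \eqref{tv_hellinger}. The only mild subtlety is the need to handle the regime where $\sum \eps_i^2$ is not small separately, which is why the constant may need to be enlarged in exactly the same manner as in the closing sentence of the proof of Lemma \ref{hellinger_lemma}.
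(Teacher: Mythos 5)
Your proof is correct and follows essentially the same route as the paper: both use the product (tensorization) formula for the Hellinger affinity together with Lemma \ref{hellinger_lemma}, a Bernoulli-type inequality $\prod_i(1-a_i)\geq 1-\sum_i a_i$, and the bound \eqref{tv_hellinger}, differing only in bookkeeping (the paper applies the elementary inequality to the squared product inside \eqref{tv_hellinger}, which incidentally avoids your case distinction for $C\sum_i\eps_i^2>1$, while you first bound $1-\rho$ and then use $1-\rho^2\leq 2(1-\rho)$).
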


\begin{proof}
By properties of product measures, it is clear from the definition \eqref{hellinger_def} that
\eeq{ \label{product_hellinger}
\rho(\l_{(X_1,\dots,X_n)},\l_{(\wt X_1,\dots,\wt X_n)}) = \prod_{i=1}^n \rho(\l_{X_i},\l_{\wt X_i}).
}
Now let $C_0$ be the constant from Lemma \ref{hellinger_lemma}.
From \eqref{tv_hellinger}, \eqref{product_hellinger}, and Lemma \ref{hellinger_lemma}, we deduce
\eq{
d_{\tv}(\l_{(X_1,\dots,X_n)},\l_{(\wt X_1,\dots,\wt X_n)}) \leq \bigg(1 - \prod_{i=1}^n (1-C_0\eps_i^2)^2\bigg)^{1/2}.
}
The desired bound is now obtained by iteratively applying the inequality $(1-x)(1-y) \geq 1-x-y$ for $x,y\geq0$.
\end{proof}

\subsection{Choice of coupling}
Naturally there are many measures $\l_{X'}$ that are absolutely continuous to $\l_X$, but we look for one which can be naturally coupled to $\l_X$ in such a way that $X'$ deviates from $X$ by as much as possible.
Without further assumptions on $\l_X$, the possibilities can be rather limited.
Two choices that are always available, however, are
\eeq{ \label{our_coupling}
X' = \min(X,X^{(1)},\dots,X^{(m)}) \quad \text{or} \quad X' = \max(X,X^{(1)},\dots,X^{(m)}),
}
where $X^{(1)},\dots,X^{(m)}$ are independent copies of $X$.
Indeed, these are the two couplings we will use to prove results on fluctuations in planar random growth models.
It is easy to check that the bounded density condition from Lemma \ref{tv_lemma} is satisfied.

\begin{lemma}
For any law $\l_X$ and any $m\geq1$, the law $\l_{X'}$ of $X'$ given by \eqref{our_coupling} is absolutely continuous with respect to $\l_X$, and has bounded density.
\end{lemma}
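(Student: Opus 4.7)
The plan is to prove the quantitatively stronger statement that
\[ \l_{X'}(A) \leq (m+1)\, \l_X(A) \quad \text{for every Borel set } A \subseteq \R, \]
from which absolute continuity and a uniform bound of $m+1$ on the Radon--Nikodym density will both follow immediately. I would deliberately avoid arguing via CDFs (where one is tempted to differentiate $F(t)^{m+1}$ or $1-(1-F(t))^{m+1}$ with respect to Lebesgue measure), since $\l_X$ is assumed to be an arbitrary Borel law and in particular may carry atoms and singular continuous parts.

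The key observation driving the inequality is a trivial set inclusion. Writing $X^{(0)} := X$, the event $\{X' \in A\}$ forces at least one of the i.i.d.\ copies $X^{(0)}, X^{(1)}, \dots, X^{(m)}$ to lie in $A$, namely whichever one attains the maximum (or, in the other case, the minimum). Hence
\[ \{X' \in A\} \subseteq \bigcup_{i=0}^m \{X^{(i)} \in A\}, \]
and applying the union bound together with the identical distribution of the $X^{(i)}$ immediately yields the claimed pointwise inequality on the two measures.

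From there, Radon--Nikodym produces a density $f = \dd\l_{X'}/\dd\l_X$, and the fact that $\int_A f\, \dd\l_X = \l_{X'}(A) \leq (m+1)\, \l_X(A)$ for every Borel set $A$ forces $f \leq m+1$ almost everywhere with respect to $\l_X$ by a standard argument (testing on the set $\{f > m+1\}$). I do not foresee any genuine obstacle; the entire argument reduces to a one-line union bound, and the only point worth emphasizing is that the resulting bound on the density depends only on the integer $m$ and not at all on the law $\l_X$, so the conclusion is completely distribution-free, as needed for the applications to FPP, LPP, and directed polymers later in the paper.
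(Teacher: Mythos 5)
Your proof is correct and follows essentially the same route as the paper: the event inclusion $\{X'\in A\}\subseteq\bigcup_{i=0}^m\{X^{(i)}\in A\}$ and a union bound give $\l_{X'}(A)\leq(m+1)\l_X(A)$ for every Borel $A$, from which absolute continuity and the density bound $m+1$ follow. The paper's proof is exactly this argument, with your final Radon--Nikodym step left implicit.
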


\begin{proof}
For any Borel set $A\subset\R$,
\eq{
\P(X'\in A) &\leq\P\bigg(\{X\in A\} \cup \bigcup_{j=1}^m \{X^{(j)}\in A\}\bigg) \\
&\leq \P(X\in A) + \sum_{j=1}^m \P(X^{(j)}\in A)
= (m+1)\P(X\in A).
}
It follows that $\P(X'\in A) = 0$ whenever $\P(X\in A)=0$, and that the density of $\l_{X'}$ with respect to $\l_{X}$ is bounded by $m+1$.
\end{proof}

For a specific distribution $\l_X$, other couplings might also be useful and easier to work with.
For instance, if $X$ is a uniform random variable on $[0,1]$, one could take $X' = aX$ for any $a\in(0,1)$.
If 
$\P(X=0) > 0$, one could simply take $X' = 0$.
For $X$ that is geometrically distributed, $X' = X + a$ is also valid for any positive integer $a$.

\section{Planar random growth models: definitions, background, and results} \label{results}

\subsection{Two-dimensional first-passage percolation} \label{fpp_sec}

Let $ E(\Z^2)$ denote the edge set of $\Z^2$.
Let $(X_e)_{e\in E(\Z^2)}$ be an i.i.d.~family of nonnegative, non-degenerate random variables.
Along a nearest-neighbor path $\gamma = (\gamma_0,\gamma_1,\dots,\gamma_n)$, the \textit{passage time} is 
\eq{
T(\gamma) \coloneqq \sum_{i=1}^n X_{(\gamma_{i-1},\gamma_i)},
}
where $(\gamma_{i-1},\gamma_i)$ denotes the (undirected) edge between $\gamma_{i-1}$ and $\gamma_i$.
For $x,y\in\Z^2$, denote by $T(x,y)$ the minimum passage time of a path connecting $x$ and $y$; that is,
\eq{
T(x,y) \coloneqq \inf\{T(\gamma)\ :\ \gamma_0 = x, \gamma_n = y\}.
}
The quantity $T(x,y)$ is called the \textit{(first) passage time} between $x$ and $y$, and any path achieving this time will be called a \textit{(finite) geodesic}. 
For a recent survey on first-passage percolation, we refer the reader to \cite{auffinger-damron-hanson17}.

We are interested in the fluctuations of $T(x,y)$ when $x$ and $y$ are separated by a distance of order $n$.
In dimensions three and higher, there is actually no known lower bound other than the trivial observation that fluctuations are at least of order $1$.
In the planar setting considered here, order $\sqrt{\log n}$ fluctuations (in the sense of Definition \ref{fluctuations_def}) were established by Pemantle and Peres \cite{pemantle-peres94} when $X_e$ is exponentially distributed.
In \cite[Theorem 2.6]{chatterjee19II}, this lower bound was extended to the family of passage time distributions described in Section \ref{fluctuations_sec}, satisfying \eqref{previous_assumption}.
Our result below expands the result to optimal generality (cf.~Remark \ref{assumption_remark}).

Let $p_c(\Z^d)$ and $\vec{p}_c(\Z^d)$ denote the critical values for undirected and directed bond percolation on $\Z^d$.
When $d = 2$, we have $p_c(\Z^2) = 1/2$ and $\vec{p}_c(\Z^2) \approx 0.6445$ \cite[Chapter 6]{bollobas-riordan06}.
In order to have a rigorous upper bound, we cite the result of \cite{balister-bollobas-stacey94} which guarantees
\eeq{ \label{oriented_bond_upper}
\vec{p}_c(\Z^2) \leq 0.6735.
}

\begin{thm} \label{fpp_thm}
With $s \coloneqq \essinf X_e \in [0,\infty)$, assume
\eeq{
\P(X_e=s) < p_c(\Z^2). \label{fpp_assumption_1}
}
Let $y_n$ be any sequence in $\Z^2$ such that $\|y_n\|_1 \geq n$ for every $n$.
Then the fluctuations of $T(0,y_n)$ are at least of order $\sqrt{\log n}$.
\end{thm}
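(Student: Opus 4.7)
The plan is to apply the framework of Section \ref{fluctuations_sec} using the ``min'' coupling from \eqref{our_coupling} with $m = 1$ and a dyadic multi-scale choice of perturbation probabilities. Independently for each edge $e$, draw an independent copy $X_e^{(1)}$ of $X_e$ and a $\mathrm{Bernoulli}(\eps_e)$ variable $Y_e$; set $\wt X_e = \min(X_e, X_e^{(1)})$ if $Y_e = 1$ and $\wt X_e = X_e$ otherwise, and let $\wt T$ denote the passage time from $0$ to $y_n$ computed with the perturbed weights. Assumption \eqref{fpp_assumption_1} and continuity from above let me fix $c_0 > 0$ with $p_H := \P(X_e > s + 2c_0) > 1/2 = p_c(\Z^2)$; by the definition of essinf, $v := \P(X_e < s + c_0) > 0$ as well. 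For a large constant $k_0$ and $K := \lfloor \log_2 n \rfloor - 2$, let $B_k$ denote the set of edges with both endpoints in the annulus $\{x \in \Z^2 : 2^{k-1} \le \|x\|_\infty \le 2^k\}$, and set $\eps_e = c \cdot 2^{-k}/\sqrt{\log n}$ for $e \in B_k$ with $k_0 \le k \le K$ (and $\eps_e = 0$ otherwise), where $c > 0$ will be chosen small. Since $|B_k| = O(4^k)$, one checks $\sum_e \eps_e^2 \le C_1 c^2$ uniformly in $n$, so Lemma \ref{tv_lemma} together with the contraction of $d_{\tv}$ under the measurable map $(X_e) \mapsto T$ yields $d_{\tv}(\l_T, \l_{\wt T}) \le C_2 c$.

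Using any geodesic $\gamma$ realizing $T$ as a competitor in the perturbed environment gives the deterministic bound $T - \wt T \ge \sum_{e \in \gamma} \Delta_e$, where $\Delta_e := X_e - \wt X_e \ge 0$. For a \emph{heavy} edge (one with $X_e > s + 2c_0$), $\P(\Delta_e \ge c_0 \mid X_e) \ge \eps_e v$, because conditional on $Y_e = 1$ the copy $X_e^{(1)}$ lies below $s + c_0 \le X_e - c_0$ with probability $v$. The main geometric input is that, because $p_H > p_c$, heavy edges form a supercritical bond percolation on $\Z^2$; standard 2D renormalization then shows that, with probability $1 - e^{-\Omega(2^k)}$, the annulus indexed by $k$ contains at least $\rho \cdot 2^k$ disjoint heavy circuits around the origin, where $\rho > 0$ depends only on $p_H$. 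Since $y_n$ lies outside the outer boundary for every $k \le K$, any path from $0$ to $y_n$ must cross each such circuit, forcing $|\gamma \cap B_k \cap \mathcal{H}| \ge \rho \cdot 2^k$, where $\mathcal{H}$ is the set of heavy edges. A union bound makes these lower bounds hold simultaneously for all $k \in \{k_0, \ldots, K\}$ with probability $1 - o(1)$.

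On that event, conditioning on $(X_e)$, the random variable $N := \sum_{e \in \gamma \cap \mathcal{H}} \one\{\Delta_e \ge c_0\}$ is a sum of independent Bernoullis with conditional mean
\[
\E[N \mid (X_e)] \ge \sum_{k=k_0}^{K} \rho \cdot 2^k \cdot \frac{c \cdot 2^{-k}}{\sqrt{\log n}} \cdot v \ \gtrsim \ \sqrt{\log n}
\]
and conditional variance at most its mean. Chebyshev's inequality then yields $N \ge c_3 \sqrt{\log n}$ with conditional probability $1 - o(1)$, so $T - \wt T \ge c_0 N \ge c_4 \sqrt{\log n}$ with unconditional probability at least some $c_5 > 0$. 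Choosing $c$ small enough that $C_2 c < c_5/2$ and plugging into Lemma \ref{fluctuation_lemma} with $b - a \le (c_4/2) \sqrt{\log n}$ delivers fluctuations of order $\sqrt{\log n}$. The step I expect to require the most care is the geometric circuit input: one must produce a uniform $\Omega(2^k)$ lower bound on disjoint heavy circuits simultaneously at every scale up to $\sim \log n$, with failure probabilities summable in $k$. This should follow from the classical 2D supercritical percolation toolkit (renormalization arguments, or sharpness of the phase transition in the style of Men'shikov and Aizenman--Barsky), but the quantitative version must be strong enough to survive the union bound across all $\log n$ scales.
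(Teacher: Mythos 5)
Your overall architecture (the min-coupling from \eqref{our_coupling}, scale-dependent Bernoulli perturbation probabilities $\eps_e \asymp 2^{-k}/\sqrt{\log n}$ on dyadic annuli, the total-variation bound via Lemma \ref{tv_lemma}, the competitor bound $T-\wt T \ge \sum_{e\in\gamma}\Delta_e$, and the final application of Lemma \ref{fluctuation_lemma}) matches the paper's strategy, and your $m=1$/Chebyshev variant of Part 2 would work. The genuine gap is in your geometric input. Crossing a circuit of \emph{heavy primal edges} does not force the geodesic to use any heavy edge: a lattice path passing from inside to outside a primal circuit is only forced to share a \emph{vertex} with it, and the two path edges incident to that vertex may both be light. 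Concretely, if all horizontal edges along the positive $x$-axis are light and every other edge is heavy, there are $\Omega(2^k)$ disjoint heavy circuits around the origin in every annulus, yet a path running along the axis uses no heavy edge at all. So the implication ``$\rho 2^k$ disjoint heavy circuits in $B_k$ $\Rightarrow |\gamma\cap B_k\cap\mathcal{H}|\ge \rho 2^k$'' is false as stated, and this is exactly the step your whole lower bound on the conditional mean of $N$ rests on.

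The correct statement is about the \emph{light} edges being subcritical, not the heavy ones being supercritical: under \eqref{fpp_assumption_1} one chooses $\delta$ with $\P(X_e<s+2\delta)<p_c(\Z^2)=1/2$ and invokes Kesten's theorem \cite[Theorem 1]{kesten80}, which gives $\rho>0$ such that, with probability $1-a\e^{-bn}$, \emph{every} self-avoiding path of length at least $n$ from a given point contains at least $\rho n$ edges of weight $\ge s+2\delta$; this is the content of Lemma \ref{fpp_lemma} (Case 1) in the paper, and after a union bound over dyadic scales and annulus starting points it yields exactly the per-annulus heavy-edge counts you want for the geodesic. Alternatively, your circuit picture can be repaired by working in the \emph{dual} lattice: a circuit of dual edges whose crossed primal edges are all heavy does force any radial path to traverse a heavy primal edge per circuit, and such dual circuits are supercritical under \eqref{fpp_assumption_1}; but proving the existence of $\Omega(2^k)$ disjoint such dual circuits with failure probability $\e^{-\Omega(2^k)}$ again reduces (via the disjoint-circuits/radial-path duality) to the same Kesten-type estimate that no long path avoids a positive density of heavy crossings, so the dual-circuit route is a detour rather than an alternative input. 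With that replacement the rest of your argument goes through, modulo the minor point that the geodesic should be selected by a deterministic, $\sigma(X)$-measurable rule (as in the paper's distinguished-geodesic device) so that it is independent of the perturbation variables $(Y_e, X_e^{(1)})$.
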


\begin{remark} \label{assumption_remark}
The above result is optimal in the following sense.
If $s = 0$ and $\P(X_e = 0) > p_c(\Z^d)$, then $T(0,y_n)$ is tight because there is an infinite cluster of zero-weight edges extending in every direction \cite{zhang-zhang84,zhang95}.
\end{remark}

When $s > 0$, we can relax \eqref{fpp_assumption_1} upon adding a weak moment condition \eqref{fpp_assumption_2b}.
This condition is standard in planar FPP and is equivalent to the limit shape having nonempty interior (see \eqref{limit_shape} and the discussion that follows).

\begin{thm} \label{fpp_thm_1}
With $s \coloneqq \essinf X_e \in [0,\infty)$, assume
\begin{subequations} \label{fpp_assumption_2}
\begin{align}
s > 0, \quad \P(X_e=s) < \vec{p}_c(\Z^2), \label{fpp_assumption_2a}
\intertext{and}
\E\min(X^{(1)},X^{(2)},X^{(3)},X^{(4)})^2 < \infty,
\label{fpp_assumption_2b}
\end{align}
\end{subequations}
where the $X^{(i)}$'s are independent copies of $X_e$.
Let $y_n$ be any sequence in $\Z^2$ such that $\|y_n\|_1 \geq n$ for every $n$.
Then the fluctuations of $T(0,y_n)$ are at least of order $\sqrt{\log n}$.
\end{thm}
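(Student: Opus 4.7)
The plan is to apply the general framework of Section~\ref{fluctuations_sec}, instantiating Lemma~\ref{fluctuation_lemma} with a coupling $(T,\wt T)$ in which $\wt T$ is the passage time computed from perturbed edge weights $\wt X_e$. Using \eqref{our_coupling}, set $X'_e = \min(X_e, X^{(1)}_e, \ldots, X^{(m)}_e)$ with $m$ fixed large enough that $\mu \coloneqq \E[X_e - X'_e] > 0$, which is possible because $X_e$ is non-degenerate. Define $\wt X_e$ by the Bernoulli perturbation \eqref{new_X} with parameter $\eps_e$, chosen in a multi-scale fashion as follows: partition edges into $N = \lfloor \log_2 n \rfloor$ annular scales $A_k = \{e : 2^{k-1} \leq \|e\|_1 \leq 2^k\}$ (distance from $0$), and set $\eps_e = \eps_k \coloneqq c_0/(2^k \sqrt{N})$ for $e \in A_k$, where $c_0 > 0$ is a small constant to be tuned.

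The total-variation side is handled by Lemma~\ref{tv_lemma}: since $|A_k| = O(2^{2k})$,
\[
d_{\tv}(\l_T, \l_{\wt T}) \leq C\bigg(\sum_{k=1}^N |A_k|\,\eps_k^2\bigg)^{1/2} = O(c_0),
\]
which can be made less than $1/4$ by choosing $c_0$ small. For the expected decrease, monotonicity of $T$ in the edge weights yields the pathwise inequality $T - \wt T \geq \sum_{e \in \pi_*} Y_e (X_e - X'_e)$, where $\pi_*$ is the unperturbed geodesic. Independence of $Y_e$ from the rest of the randomness, together with the fact that any nearest-neighbor path from $0$ to $y_n$ crosses $A_k$ with at least $c \cdot 2^k$ edges for each $k \leq N$, should give $\E[T - \wt T] \geq c' c_0 \sqrt{\log n}$, provided that a positive fraction of geodesic edges carry weights bounded away from $s$.

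To convert this mean bound into a probability bound, I would establish a second-moment estimate $\E[(T - \wt T)^2] = O(\log n)$ by using the dual inequality $T - \wt T \leq \sum_{e \in \wt\pi_*} Y_e(X_e - X'_e)$, the deterministic length bound $|\wt\pi_*| \leq T/s$ (which requires $s > 0$), and the moment assumption \eqref{fpp_assumption_2b} to control $\E[T^2]$. Paley--Zygmund then yields $\P(T - \wt T \geq c_1 \sqrt{\log n}) \geq c_2 > 0$. Feeding this into Lemma~\ref{fluctuation_lemma} gives $\P(a \leq T \leq b) \leq 1 - c$ whenever $b - a \leq c_1 \sqrt{\log n}$, which is the required fluctuation lower bound.

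The main obstacle will be the mean lower bound: one must show that a positive fraction of geodesic edges carry weights bounded away from $s$ (otherwise the perturbation is mostly idle, since $X'_e = X_e$ when $X_e = s$). This is precisely where \eqref{fpp_assumption_2a} is essential: the subcritical density $\P(X_e = s) < \vec p_c(\Z^2)$ prevents oriented percolation of minimum-weight edges, forcing the geodesic to use a positive fraction of ``non-minimum'' edges on which $\E[(X_e - X'_e) \mid X_e]$ is bounded below. The assumption \eqref{fpp_assumption_2b} ensures the limit shape is non-degenerate, so geodesics are geometrically well-behaved at each scale, and also plays a role in bounding $\E[T^2]$ for the second-moment step.
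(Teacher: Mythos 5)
Your overall architecture matches the paper's (the min-coupling \eqref{our_coupling}, Bernoulli perturbations \eqref{new_X} with $\eps_e \asymp 1/((\|e\|+1)\sqrt{\log n})$, the total-variation bound from Lemma \ref{tv_lemma}, and the final appeal to Lemma \ref{fluctuation_lemma}), but two of the steps you assert or defer are genuine gaps. First, the claim that geodesics use a positive fraction of edges with weight at least $s+2\delta$ --- and you need this at every dyadic scale for the annulus-by-annulus computation to produce $\sqrt{\log n}$ --- is the real content of the theorem, and your proposed justification (``the subcritical density $\P(X_e=s)<\vec p_c(\Z^2)$ prevents oriented percolation of minimum-weight edges, forcing the geodesic to use a positive fraction of non-minimum edges'') is not a proof. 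Under \eqref{fpp_assumption_2a} alone, $\P(X_e=s)$ may exceed $p_c(\Z^2)=1/2$, so minimal-weight edges can percolate in the undirected sense and a Kesten-type statement ``every long path contains many non-minimal edges'' is simply false; the statement holds only for geodesics, and proving it is exactly the paper's Lemma \ref{fpp_lemma}, Case 2: one compares the time constant $\mu$ with the time constant $\hat\mu$ of a stochastically larger weight field via Marchand's strict monotonicity theorem (Theorem \ref{marchand_thm}), and then needs concentration of $T(0,y)$ around $\mu(y)$ that is summable over $y$, which is Ahlberg's theorem (Theorem \ref{ahlberg_thm}); this is where \eqref{fpp_assumption_2b} is actually used, not merely for a nondegenerate limit shape or for bounding $\E[T^2]$.

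Second, the Paley--Zygmund step with $\E[(T-\wt T)^2]=O(\log n)$ is unjustified and likely unavailable at this level of generality: under \eqref{fpp_assumption_2b} the single-edge weight $X_e$ need not have a finite second (or even first) moment, so $Z_e=X_e-X_e'$ can be heavy-tailed and the diagonal terms of your second-moment computation can be infinite; moreover your dual bound runs over the perturbed geodesic $\wt\pi_*$, which is correlated with the $Y_e$'s, so the computation is not a clean conditional Bernoulli calculation. The paper sidesteps second moments entirely by working with the bounded variables $W_e=1-\e^{-Z_e}$ and a conditional exponential-moment bound: given $\sigma(X,X^{(1)},\dots,X^{(m)})$ and the good event $G$, the quantity $D=\sum_{i}\one_{\{Y_{e_i}=1\}}Z_{e_i}$ exceeds $\tfrac{\theta}{2}\sqrt{\log n}$ with conditional probability at least $1/2$, which feeds directly into Lemma \ref{fluctuation_lemma}. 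Relatedly, choosing $m$ only so that $\E[X_e-X_e']>0$ (already true for $m=1$) is too weak: the argument needs $m$ so large that $\min(X^{(1)},\dots,X^{(m)})\le s+\delta$ with probability close to $1$, so that essentially every geodesic edge with $X_e\ge s+2\delta$ receives a boost $Z_e\ge\delta$; without this, the per-annulus lower bound on $\sum_i \eps_{e_i}W_{e_i}$ does not follow.
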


\begin{remark}
As similarly mentioned in Remark \ref{assumption_remark}, the above result is optimal in the following sense.
If $s > 0$ and $\P(X_e = s) > \vec{p}_c(\Z^d)$, then $T(0,y_n) - n\|y_n\|_1$ is tight so long as $y_n$ is in or at the edge of the oriented percolation cone \cite[Remark 7]{zhang08} (c.f.~\cite{durrett84} for a description of this cone).
An independent work of Damron, Hanson, Houdr{\'e}, and Xu \cite{damron-hanson-houdre-xu20}, which uses different methods and was posted shortly after a first version of this manuscript, shows that Theorem \ref{fpp_thm_1} holds even if one assumes \eqref{fpp_assumption_2a} without \eqref{fpp_assumption_2b};  their Lemma 6 is the key innovation needed to remove this moment condition.
They also prove a statement equivalent to Theorem \ref{fpp_thm}.
\end{remark}

One should compare Theorems  \ref{fpp_thm} and \ref{fpp_thm_1} with the results of Newman and Piza \cite{newman-piza95}.
Under \eqref{fpp_assumption_1} or \eqref{fpp_assumption_2a}, and the additional assumption that $\E(X_e^2)$ is finite --- which is slightly stronger than \eqref{fpp_assumption_2b} --- they show $\Var(T(0,y_n))\geq C\log n$.
Zhang \cite[Theorem 2]{zhang08} shows the same for $y_n = (n,0)$ assuming only $\P(X_e = 0) < p_c(\Z^2)$, and Auffinger and Damron \cite[Corollary 2]{auffinger-damron13} extend this result to any direction outside the percolation cone (see also \cite[Corollary 1.3]{kubota15}).
Unfortunately, these lower bounds on variance give no information on the true size of fluctuations, hence the need for Theorems \ref{fpp_thm} and \ref{fpp_thm_1}.
Indeed, one cannot expect a matching upper bound since $\Var(T(0,y_n))$ should be of order $n^{2/3}$ in the standard cases.

The best known variance upper bound is $Cn/\log n$, proved in general dimensions for progressively more general distributions by Benjamini, Kalai, and Schramm \cite{benjamini-kalai-schramm03}, Bena\"\i m and Rossignol \cite{benaim-rossignol08}, and Damron, Hanson, and Sosoe \cite{damron-hanson-sosoe15,damron-hanson-sosoe14}.
One notable exception to the $n/\log n$ barrier comes from a simplified FPP model introduced by Sepp{\"a}l{\"a}inen \cite{seppalainen98}, for which Johansson \cite[Theorem 5.3]{johansson01} proves that the passage time fluctuations, when rescaled by a suitable factor of $n^{1/3}$, converge to the GUE Tracy--Widom distribution \cite{tracy-widom94}. 

Interestingly, in the critical case $\P(X_e=0)=1/2$ with $\P(0<X_e<\eps) = 0$, fluctuations are of order exactly $\sqrt{\log n}$.
Kesten and Zhang \cite{kesten-zhang97} prove a central limit theorem on this scale, and in the binary case $\P(X_e=1) = 1/2$, Chayes, Chayes, and Durrett \cite[Theorem 3.3]{chayes-chayes-durrett86} establish the expected asymptotic $\E(T(0,n\mathrm{e}_1)) = \Theta(\log n)$.
More delicate critical cases are examined in \cite{zhang99,damron-lam-wang17}.

Next we turn our attention to the related shape fluctuations.
For $x\in\R^2$, let $[x]$ be the unique element of $\Z^2$ such that $x\in[x]+[0,1)^2$.
For each $t>0$, define
\eeq{ \label{limit_shape}
B(t) \coloneqq \{x \in \R^2 : T(0,[x]) \leq t\},
}
which encodes the set of points reachable by a path of length at most $t$.
Sharpened from a result of Richardson \cite{richardson73}, the Cox--Durrett shape theorem  \cite[Theorem 3]{cox-durrett81} says that if (and only if) $\P(X_e = 0) < p_c(\Z^2)$ and \eqref{fpp_assumption_2b} holds,
then there
exists a deterministic, convex, compact set $\b\subset\R^2$, having the symmetries of $\Z^2$ and nonempty interior, such that for any $\eps > 0$, almost surely
\eq{
(1 - \eps)\b \subset \frac{1}{t}B(t) \subset (1 + \eps)\b \quad \text{for all large $t$}.
}
More specifically, for every $x\in\R^2$, there is a positive, finite constant $\mu(x)$ such that
\eeq{ \label{mu_def}
\lim_{n\to\infty} \frac{T(0,[nx])}{n} = \mu(x) \quad \mathrm{a.s.},
}
and
\eq{
\b = \{x \in \R^2 : \mu(x) \leq 1\}.
}
Moreover, $\mu$ is a norm on $\R^2$, and so $\b$ is the unit ball under this norm.

The question remains as to how far $B(t)$ typically is from $t \b$.
One way to pose this problem precisely is to ask for the value of
\eeq{ \label{chi_prime_def}
\chi' \coloneqq \inf\Big\{\nu : \P\big((t-t^\nu)\b \subset B(t) \subset (t+t^\nu)\b \text{ for all large $t$}\big) = 1\Big\}.
}
Another possible quantity to consider is $\chi \coloneqq \sup_{\|x\|_2=1} \chi_x$, where
\eq{
\chi_x \coloneqq \sup\{\gamma\geq0 : \exists\, C>0,\Var T(0,[nx]) \geq Cn^{2\gamma} \text{ for all $n$}\}.
}
Although it is conjectured that $\chi_x = \chi = \chi' = \frac{1}{3}$, even relating $\chi$ and $\chi'$ is  challenging because a variance lower bound does not by itself guarantee anything about fluctuations.
Assuming $\E(X_e^2) < \infty$ and either \eqref{fpp_assumption_1} or \eqref{fpp_assumption_2a}, Newman and Piza \cite[Theorem 7]{newman-piza95} prove $\max(\chi,\chi') \geq 1/5$.
Furthermore, they show $\chi_x \geq 1/8$ if $x$ is a direction of curvature for $\b$, a notion defined in \cite{newman-piza95} and recalled here.

\begin{defn} \label{curvature_def}
Let $x\in\R^2$ be a unit vector, and $z\in\partial \b$ the boundary point of $\b$ in the direction $x$.
We say $x$ is a direction of curvature for $\b$ if there exists a Euclidean ball $\SS$ (with any center and positive radius) such that $\SS\supset\b$ and $z\in\partial\SS$.
\end{defn}

Since $\b$ must have at least one direction of curvature (e.g.~take a large ball $\SS$ containing $\b$, and then translate $\SS$ until it first intersects $\partial\b$), one has $\chi \geq 1/8$ in the setting of \cite{newman-piza95}.
Unfortunately, this result does not imply order $n^{1/8}$ fluctuations without a matching upper bound on the variance.

The first work addressing typical shape fluctuations is due to Zhang \cite{zhang06}, who shows they are at least of order $\sqrt{\log n}$ in a certain sense for Bernoulli weights and general dimension.
Nakajima \cite{nakajima20} extends this result to general distributions.
In the first result proving $\chi' > 0$, Chatterjee \cite[Theorem 2.8]{chatterjee19II} shows that if for some direction of curvature $x$, $T(0,[nx])$ has fluctuations of order $n^{1/8-\delta}$ for any $\delta > 0$ in the sense of Definition \ref{fluctuations_def}, then $\chi' \geq 1/8$.
It is then shown in \cite[Theorem 2.7]{chatterjee19II} that the hypothesis of the previous sentence is true if the weight distribution satisfies \eqref{previous_assumption}.
Here we are able to replace that assumption with a small moment condition needed to use Alexander's shape theorem \cite{alexander97}, as refined by Damron and Kubota \cite{damron-kubota16}.

\begin{thm} \label{fpp_thm_2}
Assume $\P(X_e=0) < p_c(\Z^2)$ and $\E(X_e^\lambda)<\infty$ for some $\lambda > 3/2$.
If $x$ is a direction of curvature for $\b$, then $T(0,[nx])$ has fluctuations of order at least $n^{1/8-\delta}$ for any $\delta > 0$.
\end{thm}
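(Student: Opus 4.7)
The plan is to run the argument of \cite[Theorem 2.7]{chatterjee19II}, replacing its multiplicative (Mermin--Wagner-type) perturbations with the max coupling from \eqref{our_coupling}. This substitution is the whole point: it is what allows the smoothness/tail hypothesis \eqref{previous_assumption} to be dropped at the price of only a mild moment assumption. That assumption, $\E X_e^\lambda < \infty$ for some $\lambda > 3/2$, is needed only to invoke the Damron--Kubota refinement \cite{damron-kubota16} of Alexander's shape theorem, which yields a shape-theorem error of order $n^{1/2 + o(1)}$.

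Suppose for contradiction that $T_n := T(0, [nx])$ does not have fluctuations of order $\sigma_n := n^{1/8 - \delta}$ for some $\delta > 0$; then by Definition \ref{fluctuations_def} there are intervals $I_n = [a_n, a_n + \sigma_n]$ with $\P(T_n \in I_n) \to 1$. Since $X_e$ is non-degenerate and integrable, $\kappa := \E[\max(X_e, X_e^{(1)}) - X_e] > 0$. Let $S_n$ denote the slab of lattice edges within Euclidean distance $w_n := n^{3/4 + \eta}$ of the segment joining $0$ and $[nx]$, where $\eta > 0$ will be chosen small. For each $e \in S_n$, independently sample $Y_e \sim \mathrm{Bernoulli}(\eps_n)$ and define $\wt X_e$ via \eqref{new_X} with $X'_e := \max(X_e, X_e^{(1)})$; leave $\wt X_e = X_e$ for $e \notin S_n$. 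Write $\wt T_n$ for the resulting passage time. Since $|S_n \cap E(\Z^2)| = O(n w_n)$, Lemma \ref{tv_lemma} gives $d_{\tv}(\l_{T_n}, \l_{\wt T_n}) \leq C \eps_n \sqrt{n w_n}$.

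Pointwise $\wt X_e \geq X_e$, so $\wt T_n \geq T_n$ and, writing $\wt \pi_n$ for the geodesic in the perturbed environment, $\wt T_n - T_n \geq \sum_{e \in \wt\pi_n}(\wt X_e - X_e)$. A Newman--Piza-style argument based on the curvature of $\b$ at $x$ (Definition \ref{curvature_def}) combined with the Damron--Kubota error bound forces $\wt\pi_n \subset S_n$ with probability $\to 1$: a transverse excursion of size $w$ would make the $X$-length of $\wt\pi_n$ exceed $n\mu(x)$ by $\Omega(w^2/n)$, while the competitor path $\pi_n$ supplies the upper bound $\wt T_n \leq n\mu(x) + O(\sigma_n + n^{1/2 + o(1)} + \eps_n n)$ with high probability. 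With $w_n = n^{3/4 + \eta}$, the resulting threshold is crossed, so $|\wt\pi_n| = |\wt\pi_n \cap S_n| \geq n$ w.h.p. Fixing a truncation level $M$ so that $\E[(\max(X_e, X_e^{(1)}) - X_e) \wedge M] \geq \kappa/2$ (possible since $\kappa > 0$), Bernstein's inequality applied to the independent summands $\one_{\{Y_e = 1\}} \cdot ((X'_e - X_e) \wedge M)$ indexed by $e \in \wt\pi_n$ then yields $\wt T_n - T_n \geq c \eps_n n$ with probability $1 - \exp(-c' n \eps_n / M) \to 1$.

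Setting $\eps_n := n^{-7/8 - \delta/2}$ and $\eta < \delta$, one has simultaneously $c \eps_n n = c\,n^{1/8 - \delta/2} \gg \sigma_n$ and $d_{\tv}(\l_{T_n}, \l_{\wt T_n}) \leq C\,n^{\eta/2 - \delta/2} \to 0$. Lemma \ref{fluctuation_lemma} with $(X, Y) = (T_n, \wt T_n)$ then gives
\[
\P(T_n \in I_n) \leq \tfrac{1}{2}\bigl(1 + \P(\wt T_n - T_n \leq \sigma_n) + d_{\tv}(\l_{T_n}, \l_{\wt T_n})\bigr) \longrightarrow \tfrac{1}{2},
\]
contradicting $\P(T_n \in I_n) \to 1$. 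The main obstacle is the curvature-based confinement of $\wt\pi_n$ to $S_n$, which must be carried out self-consistently for the perturbed environment and which requires the $n^{1/2 + o(1)}$ shape-theorem error; the assumption $\lambda > 3/2$ is exactly what makes that error bound available via \cite{damron-kubota16}.
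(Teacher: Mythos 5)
Your high-level plan (perturb i.i.d.\ weights in a cylinder of width $n^{3/4+\delta}$ around the segment, with Bernoulli probability $\eps_n\asymp n^{-7/8-\delta}$, bound the total variation via Lemma \ref{tv_lemma}, and feed a passage-time shift of order $n^{1/8-\delta}$ into Lemma \ref{fluctuation_lemma}, using Damron--Kubota's $\chi'\le 1/2$ together with Newman--Piza curvature to confine geodesics) is exactly the architecture of the paper's proof. But you have inverted the coupling, and this creates a genuine gap. First-passage time is a \emph{minimum}, so the paper perturbs with the \emph{min} coupling $X_e'=\min(X_e,X_e^{(1)},\dots,X_e^{(m)})$: then $\wt T_n\le \sum_{e\in\pi_n}\wt X_e$ along the \emph{unperturbed} geodesic $\pi_n$, so $T_n-\wt T_n\ge \sum_{e\in\pi_n}\one_{\{Y_e=1\}}Z_{e}$, where $\pi_n$ and the $Z_e$'s are measurable with respect to $\sigma(X,X^{(1)},\dots,X^{(m)})$ and the $Y_e$'s are independent of that $\sigma$-algebra; the Chernoff/Bernstein step is then legitimate conditionally on the path. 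With your \emph{max} coupling the gain must be certified along the \emph{perturbed} geodesic $\wt\pi_n$, and your step ``Bernstein's inequality applied to the independent summands indexed by $e\in\wt\pi_n$'' is invalid: $\wt\pi_n$ is a random index set chosen to minimize the perturbed weights, hence it is adapted to, and biased against, the very variables $\one_{\{Y_e=1\}}(X_e'-X_e)$ you are summing (the geodesic reroutes around perturbed edges). A union bound over paths cannot repair this, since for a fixed path the failure probability is only $\exp(-c\,\eps_n n)=\exp(-c\,n^{1/8-\delta/2})$ against $e^{cn}$ many paths, so the claimed lower bound $\wt T_n-T_n\ge c\,\eps_n n$ w.h.p.\ is unproved. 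Relatedly, your confinement step must then be carried out for $\wt\pi_n$ in the perturbed (non-translation-invariant, slab-modified) environment, which you flag but do not resolve; the paper avoids this entirely because only the unperturbed geodesic matters, and it confines that geodesic using \cite[Theorem 6 and (2.21)]{newman-piza95} plus \cite[Theorem 1.2]{damron-kubota16} --- and only with some probability $q_0>0$, not with probability tending to $1$ as you assert (positive probability suffices for Lemma \ref{fluctuation_lemma}).

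Two further points. With the min coupling one cannot rely on ``$\E[\min-X]<0$ on every edge'': edges with $X_e$ near $\essinf X_e$ gain nothing, which is why the paper needs Lemma \ref{fpp_lemma} (via Kesten/Marchand/Ahlberg) to guarantee that geodesics contain a positive fraction of edges with $X_e\ge s+2\delta$, and takes $m$ large so that $Z_e\ge\delta$ with high probability on those edges; your $\kappa>0$ observation is the max-coupling analogue and does not transfer. Finally, a minor logical point: the negation of Definition \ref{fluctuations_def} only yields intervals with probability close to $1$ along a subsequence, so it is cleaner (and is what the paper does) to argue directly: establish $d_{\tv}(\l_{T_n},\l_{\wt T_n})\le q_0/8$ and $\P\bigl(T_n-\wt T_n>\tfrac{\alpha c}{2}n^{1/8-\delta}\bigr)\ge q_0/4$ for all large $n$, and conclude from Lemma \ref{fluctuation_lemma} with explicit constants.
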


By the argument of \cite[Theorem 2.8]{chatterjee19II}, we obtain the following lower bound on the shape fluctuation exponent.

\begin{cor}
Assume the setting of Theorem \ref{fpp_thm_2}.
Then the shape fluctuation exponent defined by \eqref{chi_prime_def} satisfies $\chi' \geq \frac{1}{8}$.
\end{cor}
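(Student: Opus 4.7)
The plan is to argue by contradiction, following the template of \cite[Theorem 2.8]{chatterjee19II}. Suppose $\chi' < 1/8$ and pick $\nu \in (\chi', 1/8)$. By the definition \eqref{chi_prime_def}, the event
\[
E_\nu \coloneqq \{(t - t^\nu)\b \subset B(t) \subset (t + t^\nu)\b \text{ for all sufficiently large } t\}
\]
has probability one. Since $\b$ always admits at least one direction of curvature (as noted just after Definition \ref{curvature_def}), fix such an $x$; this makes Theorem \ref{fpp_thm_2} applicable at $x$.

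Next I would convert the two containments defining $E_\nu$ into a pointwise deviation bound for $T(0,[nx])$. Applying the outer containment with $t = T(0,[nx])$, together with $[nx] \in B(T(0,[nx]))$, yields $\mu([nx]) \leq T(0,[nx]) + T(0,[nx])^\nu$. Conversely, the inner containment implies that any $t$ with $\mu([nx]) \leq t - t^\nu$ satisfies $T(0,[nx]) \leq t$, so taking $t$ just above $\mu([nx]) + 2\mu([nx])^\nu$ gives the reverse estimate. Invoking the shape theorem \eqref{mu_def} to see that $T(0,[nx]) \leq 2n\mu(x)$ almost surely for large $n$, and using that $\mu$ is a norm so $\mu([nx]) = n\mu(x) + O(1)$, both estimates collapse on $E_\nu$ into
\[
|T(0,[nx]) - n\mu(x)| \leq C n^\nu \quad \text{for all sufficiently large } n,
\]
for some deterministic constant $C = C(\mu(x),\nu) > 0$.

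Then choose $\delta \in (0, 1/8 - \nu)$. By Theorem \ref{fpp_thm_2}, $T(0,[nx])$ has fluctuations of order at least $n^{1/8 - \delta}$ in the sense of Definition \ref{fluctuations_def}. However, the previous display combined with $\P(E_\nu) = 1$ shows that the deterministic interval $I_n \coloneqq [n\mu(x) - Cn^\nu,\, n\mu(x) + Cn^\nu]$ has length $2Cn^\nu$, which for any preassigned $c_1 > 0$ satisfies $2Cn^\nu \leq c_1 n^{1/8-\delta}$ eventually (since $\nu < 1/8 - \delta$), while simultaneously $\P(T(0,[nx]) \in I_n) \to 1$. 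No pair of constants $c_1, c_2 > 0$ can then fulfill Definition \ref{fluctuations_def} with $\delta_n = n^{1/8-\delta}$, which is the desired contradiction. Hence $\chi' \geq 1/8$.

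The only nonroutine step is the shape-to-point reduction in the second paragraph: the self-referential $T(0,[nx])^\nu$ term must be absorbed using the linear growth rate from the shape theorem, and the $O(1)$ discrepancy between $\mu([nx])$ and $n\mu(x)$ must be controlled on the scale $n^\nu$. Both are comfortable for $\nu \in (0,1)$, so I expect no real difficulty. I note that curvature of $\b$ at $x$ is not used in this reduction itself; it enters only through the invocation of Theorem \ref{fpp_thm_2}.
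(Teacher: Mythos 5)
Your argument is correct and is essentially the argument the paper invokes: the corollary is deduced exactly as in \cite[Theorem 2.8]{chatterjee19II}, by assuming $\chi'<1/8$, converting the shape containments for some $\nu\in(\chi',1/8)$ into an almost-sure eventual bound $|T(0,[nx])-n\mu(x)|\leq Cn^\nu$ at a direction of curvature $x$, and contradicting the order-$n^{1/8-\delta}$ fluctuation lower bound of Theorem \ref{fpp_thm_2} via Definition \ref{fluctuations_def}. The small points you flag (absorbing the self-referential $t^\nu$ term via \eqref{mu_def}, the $O(1)$ lattice discrepancy, and taking $\nu>0$ by monotonicity of the shape event in $\nu$) are handled correctly.
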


\subsection{Corner growth model} \label{lpp_sec}

In its planar form, LPP is often called the \textit{corner growth model}.
It is similar to FPP, the main differences being that only directed paths are considered (i.e.~coordinates never decrease), and the passage time $T$ is defined by time-maximizing paths rather than minimizing ones.
Furthermore, by convention we place the weights on the vertices instead of the edges, but this difference is more technical than conceptual.
We will now make this setup precise.

Let $\Z^2_+$ denote the first quadrant of the square lattice, that is the set of all $v = (a,b) \in \Z^2$ with $a,b\geq 0$.
We will write the standard basis vectors as $\mathbf{e}_1 = (1,0)$ and $\mathbf{e}_2 = (0,1)$.
Let $(X_v)_{v\in\Z^2_+}$ be an i.i.d.~family of non-degenerate random variables; because of the directedness, no assumption of nonnegativity is needed.
A \textit{directed} path $\vec\gamma = (\gamma_0,\gamma_1,\dots,\gamma_n)$ is one in which each increment $\gamma_i - \gamma_{i-1}$ is equal to $\mathbf{e}_1$ or $\mathbf{e}_2$.
The \textit{passage time} of such a path is
\eq{
T(\vec\gamma) \coloneqq \sum_{i=1}^n X_{\gamma_i}.
}
Let $T(u,v)$ be the maximum passage time of a directed path from $u$ to $v$, called the \textit{(last) passage time},
\eq{
T(u,v) \coloneqq \sup\{T(\vec\gamma)\ |\ \gamma_0 = u, \gamma_n = v\}.
}
We will again refer to any path achieving this time as a \textit{(finite) geodesic}.
Once more $T$ satisfies a shape theorem under mild assumptions on $\l_{X}$, which we will not discuss. 
For further background, the reader is directed to \cite{martin06,quastel-remenik14,rassoul18}.

The directed structure advantages this model because of correspondences with problems in queueing networks, interacting particle systems, combinatorics, and random matrices.
Remarkable progress has been made by leveraging these connections in specific cases, leading to rigorous proofs of order $n^{1/3}$ passage time fluctuations converging to Tracy--Widom distributions upon rescaling.
This has been successfully carried out by Johansson \cite{johansson00} when the $X_v$'s are geometrically or exponentially distributed, building on work of Baik, Deift, and Johansson \cite{baik-deift-johansson99} connected to a continuum version of LPP.
The results extend to point-to-line passage times \cite{borodin-ferrari-prahofer-sasamoto07}.
Purely probabilistic techniques for accessing fluctuation exponents appear in \cite{cator-groeneboom06, balazs-cator-seppalainen06}.
The fluctuation exponent of $1/3$ is also present in a model known as Brownian LPP, for which the connection to Tracy--Widom laws is more explicit \cite{oconnell03}.

Away from exactly solvable settings, Chatterjee \cite[Theorem 8.1]{chatterjee08} proves that when the vertex weights are Gaussian, the point-to-line passage time has variance at most $Cn/\log n$.
Graham \cite{graham12} extends this result to general dimensions, also discussing uniform and gamma distributions.
To our knowledge, no general lower bound on fluctuations has been written for LPP.
It is worth mentioning, however, that the results in \cite{newman-piza95} are also stated for \textit{directed} FPP.
It is natural to suspect that many of results mentioned for FPP could be naturally translated to the LPP setting.
Indeed, as we now discuss, Theorem \ref{fpp_thm} carries over with little modification.

Let $\vec{p}_{c,\,\mathrm{site}}(\Z^2)$ be the critical value of directed site percolation on $\Z^2$.
It is clear that $\vec{p}_{c,\,\mathrm{site}}(\Z^2)$ is at least as large as its undirected counterpart ${p}_{c,\,\mathrm{site}}(\Z^2)$, which in turn satisfies ${p}_{c,\,\mathrm{site}}(\Z^2) > p_c(\Z^2) = 1/2$ \cite{grimmett-stacey98}.
In the way of upper bounds, it is known from \cite{balister-bollobas-stacey94,liggett95} that $\vec{p}_{c,\,\mathrm{site}}(\Z^2) \leq 3/4$. 
Let $S \coloneqq \esssup X_v \in (-\infty,\infty]$.
The assumption analogous to \eqref{fpp_assumption_1} or \eqref{fpp_assumption_2a} is
\eeq{ \label{lpp_assumption}
\P(X_v = S) < \vec{p}_{c,\,\mathrm{site}}(\Z^2).
}

\begin{thm} \label{lpp_thm}
Assume \eqref{lpp_assumption}.
Let $v_n$ be any sequence in $\Z^2_+$ such that $\|v_n\|_1 \geq n$ for every $n$.
Then the fluctuations of $T(0,v_n)$ are at least of order $\sqrt{\log n}$.
\end{thm}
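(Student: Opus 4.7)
The plan is to mirror the proof of Theorem \ref{fpp_thm}, making the natural adjustments needed to pass from a first-passage (min) model to a last-passage (max) model. I would apply Lemma \ref{fluctuation_lemma} with $X = T(0, v_n)$ and $Y = \wt T(0, v_n)$, where $\wt T$ is the last-passage time computed in a perturbed environment $(\wt X_v)$. Since LPP maximizes weight along paths, the appropriate choice from \eqref{our_coupling} is the max-coupling $X_v' = \max(X_v, X_v^{(1)}, \dots, X_v^{(m)})$ for $m$ a large integer to be chosen, with $\wt X_v$ defined via an independent Bernoulli$(\eps_v)$ switch as in \eqref{new_X}. Then $\wt X_v \geq X_v$ pointwise, so $\wt T \geq T$ and $|X - Y| = \wt T - T$. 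Lemma \ref{tv_lemma} bounds $d_{\tv}(\l_X, \l_Y) \leq C(\sum_v \eps_v^2)^{1/2}$, which I would arrange to be at most a small absolute constant by taking the $\eps_v$ suitably small.

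For the lower tail of the increment, the starting point is the pointwise bound
\[
\wt T - T \;\geq\; \sum_{v \in \vec\gamma^*}(\wt X_v - X_v),
\]
where $\vec\gamma^*$ is any geodesic in the original environment. To activate this bound I would use \eqref{lpp_assumption} as follows. Since $\P(X_v = S) < \vec{p}_{c,\,\mathrm{site}}(\Z^2)$, right-continuity of the distribution function yields $\eta > 0$ with $\P(X_v > S - \eta) < \vec{p}_{c,\,\mathrm{site}}(\Z^2)$; the ``saturated'' sites $\{v : X_v > S - \eta\}$ are then sub-critical for directed site percolation on $\Z^2_+$, so they contain no infinite directed open path. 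A standard renormalization or blocking argument then forces any directed path from $0$ to $v_n$ to cross $\Theta(n)$ non-saturated vertices $\{v : X_v \leq S - \eta\}$ with overwhelming probability. Taking $m$ large in the max-coupling makes $\P(X_v' \geq S - \eta/2)$ arbitrarily close to $1$, so at each non-saturated vertex $v$ the increment $\wt X_v - X_v$ is at least $\eta/2$ whenever the Bernoulli switch fires, outside of a small-probability event.

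The final step is to schedule the $(\eps_v)$ so that $\wt T - T \geq c\sqrt{\log n}$ holds with probability bounded away from zero while $\sum_v \eps_v^2$ stays bounded. This is done exactly as in the FPP proof of Theorem \ref{fpp_thm}, by distributing the perturbations over the relevant region so that a typical directed geodesic meets a controlled number of them, and then estimating the resulting sum of near-independent Bernoulli contributions via a second-moment argument. Plugging these two ingredients into Lemma \ref{fluctuation_lemma} then gives the claimed $\sqrt{\log n}$ fluctuation lower bound.

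The principal obstacle is that the geodesic $\vec\gamma^*$ is itself random and cannot be inspected before choosing the perturbation sites, so one needs a worst-case geometric input that holds uniformly over possible geodesics. This is precisely the role of the directed site percolation hypothesis \eqref{lpp_assumption}: it is the LPP counterpart of the bond-percolation condition \eqref{fpp_assumption_1} used for FPP, guaranteeing that every directed path from $0$ to $v_n$ must cross a linear number of non-saturated vertices available for upward perturbation, regardless of which path the geodesic ultimately turns out to be.
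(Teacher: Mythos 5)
Your proposal is correct in outline and follows essentially the same route as the paper: the max-coupling from \eqref{our_coupling} with an independent Bernoulli switch, total variation control via Lemma \ref{tv_lemma}, a subcritical directed site percolation estimate guaranteeing that \emph{every} directed path carries a linear density of perturbable vertices (this is exactly the paper's Lemma \ref{lpp_lemma}, proved by the blocking argument you invoke), the dyadic scheduling $\eps_v \asymp \alpha/(\|v\|_1\sqrt{\log n})$ carried over from the proof of Theorem \ref{fpp_thm}, and Lemma \ref{fluctuation_lemma} to conclude. One caveat: your choice of $\eta$ by right-continuity at $S$ only makes sense when $S<\infty$, whereas \eqref{lpp_assumption} allows $S=\esssup X_v=\infty$ (e.g.\ exponential weights); in that case you must truncate, i.e.\ fix a finite level $S'$ with $\P(X_v\ge S'-2\delta)<\vec{p}_{c,\,\mathrm{site}}(\Z^2)$ and run your argument with $S'$ in place of $S$, which is precisely how the paper handles it. Beyond that, the paper folds the event that the independent copies exceed $S'-\delta$ into a single site variable $Z_v$ before applying the percolation lemma, and uses an exponential-moment (Chernoff) bound where you propose a second-moment bound on the Bernoulli contributions; both work since those contributions are bounded, so these are only cosmetic differences.
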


In the case $v_n = n\mathbf{e}_1$, the passage time $T(0,n\mathbf{e}_1)$ is just the sum of $n$ i.i.d.~random variables and thus fluctuates on the scale of $n^{1/2}$.
The $n^{1/3}$ scaling should manifest when the two coordinates of $v_n$ are both of order $n$.
Interpolating between these two regimes, it is expected that if $v_n = (n,\floor{n^{a}})$ for $a \in (0,1)$, then $T(0,v_n)$ has fluctuations of order $n^{1/2-a/6}$. 
Such a result is proved, along with rescaled convergence to the GUE Tracy--Widom distribution, for $a < 3/7$ \cite{baik-suidan05,bodineau-martin05}.

\subsection{Directed polymers in $1+1$ dimensions} \label{dp_sec}

The model of directed polymers in random environment is a positive-temperature version of LPP.
That is, instead of examining only maximal paths, we consider the softer model of defining a Gibbs measure on paths, with those of greater passage time receiving a higher probability.
With $\Z^2_+$ as before, we again take $(X_v)_{v\in\Z^2_+}$ to be an i.i.d.~family of non-degenerate random variables, called the \textit{random environment}.
Let $\vec\Gamma_n$ denote the set of directed paths $\vec\gamma = (v_0,v_1,\dots,v_n)$ of length $n$ starting at the origin $v_0 = 0$.
Given an inverse temperature $\beta > 0$, define a Gibbs measure $\rho_n^\beta$ on $\vec\Gamma_n$ by
\eq{
\rho_n^\beta(\vec\gamma) \coloneqq \frac{\e^{\beta H_n(\vec\gamma)}}{Z_n^\beta}, \qquad H_n(\vec\gamma) \coloneqq \sum_{i=1}^n X_{v_i}, \quad \vec\gamma \in \vec\Gamma_n, 
}
where now the object of interest is the \textit{partition function},
\eq{
Z_n^\beta \coloneqq \sum_{\vec\gamma\in\vec\Gamma_n} \e^{\beta H_n(\vec\gamma)}.
}
Since $Z_n^\beta$ grows exponentially in $n$, the proper linear quantity to consider is the \textit{free energy}, $\log Z_n^\beta$.
Strictly speaking, the following result is not the exact analogue of Theorems \ref{fpp_thm} and \ref{lpp_thm}, since we have not fixed the endpoint.
Nevertheless, the same argument goes through for point-to-point free energies.

\begin{thm} \label{dp_thm}
Assume \eqref{lpp_assumption}.
Then the fluctuations of $\log Z_n^\beta$ are at least of order $\sqrt{\log n}$ for any $\beta > 0$.
\end{thm}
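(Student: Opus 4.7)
The strategy parallels the proof of Theorem \ref{lpp_thm}, with Jensen's inequality replacing the geodesic/maximum step. For each $v \in \Z^2_+$ with $1 \le r_v := v_1 + v_2 \le n$, draw i.i.d.\ copies $X_v^{(1)}, \dots, X_v^{(m)}$ of $X_v$ and an independent Bernoulli$(\eps_v)$ variable $Y_v$, all mutually independent and independent of everything else; set $X_v' := \max(X_v, X_v^{(1)}, \dots, X_v^{(m)})$ and $\wt X_v$ as in \eqref{new_X}. For $r_v > n$ leave $\wt X_v := X_v$. Choose $\eps_v := c_0/(r_v \sqrt{\log n})$, so that $\sum_v \eps_v^2 = O(c_0^2)$; Lemma \ref{tv_lemma} together with the fact that $\log Z_n^\beta$ is a deterministic function of the environment then give $d_{\tv}(\l_{\log Z_n^\beta}, \l_{\log \wt Z_n^\beta}) \le C c_0$, which can be made as small as desired upon taking $c_0$ small.

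Because $\wt X_v \ge X_v$ pointwise, $\wt Z_n^\beta / Z_n^\beta = \E_{\rho_n^\beta}[e^{\beta\Delta(\vec\gamma)}]$ with $\Delta(\vec\gamma) := \sum_{i=1}^n Y_{\gamma_i}(X'_{\gamma_i} - X_{\gamma_i}) \ge 0$, and Jensen's inequality yields
\[
\log \wt Z_n^\beta - \log Z_n^\beta \;\geq\; \beta\, \E_{\rho_n^\beta}[\Delta] \;=\; \beta \sum_{k=1}^n \sum_{v:\, r_v=k} P_n(v)\, Y_v (X_v' - X_v),
\]
where $P_n(v) := \rho_n^\beta(v \in \vec\gamma)$. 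Integrating out $(Y_v, X_v')$ and using the structural identity $\sum_{v:\, r_v = k} P_n(v) = 1$ (each directed path of length $n$ crosses antidiagonal $k$ exactly once), the $X$-conditional expectation of this sum becomes $\sum_{k=1}^n \eps_k\, \E_{\rho_n^\beta}[g(X_{\gamma_k})]$, where $g(x) := \E[\max(x, X^{(1)}, \dots, X^{(m)}) - x]$ is non-negative and strictly positive on $(-\infty, S)$.

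The crux is then the uniform annealed estimate $\E[\E_{\rho_n^\beta}[g(X_{\gamma_k})]] \ge q > 0$ for every $k \in \{1, \dots, n\}$. Granted this, summing over $k$ gives $\E[\E_{\rho_n^\beta}[\Delta]] \asymp \sqrt{\log n}$, and a second-moment/Paley--Zygmund argument --- exploiting independence of $(Y_v, X_v')$ across $v$ to control the covariances between antidiagonals --- upgrades this to $\E_{\rho_n^\beta}[\Delta] \ge c_1 \sqrt{\log n}$ with probability at least some $c_2 > 0$. Applying Lemma \ref{fluctuation_lemma} with $X = \log Z_n^\beta$ and $Y = \log \wt Z_n^\beta$ on the joint space, and taking $c_0$ sufficiently small, then yields the desired $\sqrt{\log n}$ fluctuation lower bound.

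The uniform annealed estimate is the principal obstacle, and it is where assumption \eqref{lpp_assumption} is genuinely used: subcriticality of directed site percolation in the max-vertex set $\{v : X_v = S\}$ keeps $\rho_n^\beta$ from placing overwhelming probability on paths sitting at $X_{\gamma_k} = S$ at every antidiagonal. Translating this percolative input into a quantitative lower bound on $\E[\rho_n^\beta(X_{\gamma_k} < S - \delta)]$ that is uniform in both $k$ and $n$ --- and robust to large $\beta$, where the Gibbs measure concentrates on near-maximal paths --- is the most delicate step of the argument and consumes the bulk of the work.
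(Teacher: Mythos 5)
Your setup (the max-coupling, the choice $\eps_v \asymp 1/(\|v\|_1\sqrt{\log n})$, the total-variation bound via Lemma \ref{tv_lemma}, and the use of Jensen to reduce to the Gibbs average of the gain $\Delta$) matches the paper's proof. But from that point on you take a different route, and it has two genuine gaps. First, your ``uniform annealed estimate'' $\E[\E_{\rho_n^\beta}[g(X_{\gamma_k})]]\geq q>0$, uniformly in $k$ and $n$, is exactly the heart of the matter and you do not prove it; you only indicate that it should follow from \eqref{lpp_assumption}. The paper never needs a per-antidiagonal statement at all: it imports the event $G$ from the proof of Theorem \ref{lpp_thm}, built from Lemma \ref{lpp_lemma}, which says that with probability at least $3/4$ \emph{every} directed path accumulates $\sum_i W_{v_i}\geq \rho(1-\e^{-\delta})2^k$ over each dyadic stretch, simultaneously for all large scales. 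This is a quenched, uniform-over-paths statement (a positive density of sites where $Z_v\geq\delta$ along every path, averaged over scales), which is both what the subcritical directed percolation input actually delivers and what survives the ``large $\beta$ / near-maximal paths'' difficulty you worry about; a fixed-$k$ statement is neither supplied by the percolation lemma nor needed.

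Second, your concluding ``second-moment/Paley--Zygmund'' upgrade does not go through under the hypotheses of the theorem. Only \eqref{lpp_assumption} is assumed: the weights may have no finite second (or even first) moment, so $\E[(X_v'-X_v)^2]$ and hence the conditional variance of $\E_{\rho_n^\beta}[\Delta]=\sum_v P_n(v)Y_v(X_v'-X_v)$ can be infinite, and the unconditional second moment also involves the random quantity $\sum_v P_n(v)\eps_v g(X_v)$, which you cannot control by its mean without further assumptions. The paper circumvents moments entirely: on $G$, a Chernoff-type bound in the $Y$-variables (using $W_v=1-\e^{-Z_v}\leq 1$) shows that for every path the gain exceeds $\tfrac{\theta}{2}\sqrt{\log n}$ with conditional probability at least $3/4$; one then replaces the gain by the truncated variable $D_{\vec\gamma}\in\{0,\tfrac{\theta}{2}\sqrt{\log n}\}$, so that the Gibbs average $\sum_{\vec\gamma}\e^{H_n(\vec\gamma)}Z_n^{-1}D_{\vec\gamma}$ is deterministically bounded by $\tfrac{\theta}{2}\sqrt{\log n}$, and a lower bound on its expectation converts directly into a lower bound on the probability that it is of order $\sqrt{\log n}$ --- no second moment and no Paley--Zygmund needed. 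To repair your argument you would either have to prove your annealed per-level estimate together with moment-free concentration, or, more simply, adopt the paper's path-uniform event and truncation device.
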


As in LPP, there are several exactly solvable models of $(1+1)$-dimensional directed polymers for which free energy fluctuations on the order of $n^{1/3}$ can be calculated, beginning with the inverse-gamma (or log-gamma) polymer introduced by Sepp{\"a}l{\"a}inen \cite{seppalainen12}.
There are now three other solvable models: the strict-weak polymer \cite{corwin-seppalainen-shen15,oconnell-ortmann15}, the Beta RWRE \cite{barraquand-corwin17}, and the inverse-beta polymer \cite{thiery-doussal15}.
Chaumont and Noack show in \cite{chaumont-noack18I} that these are the only possible models possessing a certain stationarity property, and in \cite{chaumont-noack18II} provide a unified approach to calculating their fluctuation exponents.
We also mention the positive temperature version of Brownian LPP, introduced by O'Connell and Yor \cite{oconnell-yor01}, for which order $n^{1/3}$ energy fluctuations have been established \cite{seppalainen-valko10,borodin-corwin14,borodin-corwin-ferrari14}.

For the general model considered here, the situation is much the same as for FPP.
In the way of upper bounds, Alexander and Zygouras \cite{alexander-zygouras13} prove exponential concentration of $\log Z_n^\beta - \E(\log Z_n^\beta)$ on the scale of $\sqrt{n/\log n}$, in analogy with works mentioned earlier \cite{benjamini-kalai-schramm03,benaim-rossignol08,damron-hanson-sosoe15,damron-hanson-sosoe14,chatterjee08,graham12}.
Their results hold in general dimensions and for a wide range of distributions.
As for lower bounds, Piza \cite{piza97} proves $\Var(\log Z_n^\beta) \geq C\log n$ for non-positive weights with finite variance, as well as weaker versions of the shape theorem results from \cite{newman-piza95}.

Although Theorem \ref{dp_thm} does not even prove a positive fluctuation exponent, simply knowing that free energy fluctuations diverge may be significant in understanding the phenomenon of polymer localization.
One way of defining this phenomenon is to say the polymer measure is \textit{localized} if its endpoint distribution has atoms:
\eeq{ \label{localization}
\limsup_{n\to\infty} \max_{\|v\|_1=n} \rho_n^\beta(\gamma_n = v) > 0 \quad \mathrm{a.s.}
}
It is known \cite[Proposition 2.4]{comets-shiga-yoshida03} that \eqref{localization} occurs for any $\beta > 0$ in $1+1$ and $1+2$ dimensions, and for sufficiently large $\beta$ in higher dimensions, depending on the law of the $X_v$'s.
What is unclear, however, is whether the atoms or ``favorite endpoints" are typically close to one another or far apart.
From the solvable case \cite{seppalainen12}, there is evidence suggesting the former is true at least in $1+1$ dimensions \cite{comets-nguyen16}.
In general dimensions, the same is known only along random subsequences \cite{bates18,bates-chatterjee20}.
These subsequences also exist for polymers on trees, but in that setting, the favorite sites more frequently appear far apart \cite{barral-rhodes-vargas12}; this behavior is thus difficult to rule out in high-dimensional lattices.
It is interesting, then, that for both polymers on trees and for high-temperature lattice polymers in dimensions $1+3$ and higher, the fluctuations of $\log Z_n^\beta$ are order $1$.
On the lattice, this fact is easy to deduce from a martingale argument; see \cite[Chapter 5]{comets17}.
For the tree case, see \cite[Section 5]{derrida-spohn88}.

\section{Proofs of main results} \label{proofs}

The proofs follow a general strategy, which we outline below.
For clarity, we will break each proof into two parts:
\begin{itemize}[leftmargin=0.75in]
\item[{\textbf{Part 1.}}] Use the coupling \eqref{our_coupling} with large enough $m$ to show that in all relevant paths, there is a high frequency of weights where $X'$ is far away from $X$.
\item[{\textbf{Part 2.}}] Show the same is true when $X'$ is replaced by $\wt X$ defined by \eqref{new_X}, provided we make good choices for $\eps$.
This step uses Part 1, as well as the independence of $Y$ from $X$ and $X'$.
Conclude that the passage time (or free energy) has, with positive probability independent of $n$, changed by an amount of the desired order.
\end{itemize}

\subsection{Proof of Theorems \ref{fpp_thm} and \ref{fpp_thm_1}}

Recall the notation
\eq{
s = \essinf X_e.
}
Before proceeding with the main argument, we begin with a lemma meant to guarantee that geodesics contain many edges with weights far from $s$.
Preempting a technical concern, we note that with probability $1$, geodesics do exist between all pairs of points in $\Z^2$ without any assumptions on the distribution of $X_e$ \cite{wierman-reh78}.
We will use the notation $B_n(x) \coloneqq \{y\in\Z^2 : \|x-y\|_1\leq n$\} and $\partial B_n(x) \coloneqq \{y \in \Z^2 : \|x-y\|_1 = n\}$ for $n\geq1$.

\begin{lemma} \label{fpp_lemma}
Given $\delta>0$ and $\rho \in (0,1)$, let $E_n^x$ be the event that there exists a geodesic $\gamma = (\gamma_0,\gamma_1,\dots,\gamma_N)$ from $x\in\Z^2$ to some $y\in \partial B_n(x)$ such that 
\eeq{ \label{bad_proportion}
\#\{1 \leq i \leq N : X_{(\gamma_{i-1},\gamma_i)} \geq s + 2\delta\} < \rho n.
}
If \eqref{fpp_assumption_1} or \eqref{fpp_assumption_2} holds, then there are $\delta$ and $\rho$ sufficiently small that
\eeq{ \label{sum_n_bd_1}
\sum_{n = 1}^\infty \P(E_n^0) < \infty.
}
Furthermore, for some sequence $(n_k)_{k=1}^\infty$ satisfying $2^{k-1}< n_k\leq 2^{k}$,
\eeq{ \label{sum_n_bd_2}
\sum_{k=1}^\infty\sum_{\|x\|_1 = n_k} \P(E_{n_k}^x) < \infty.
}
\end{lemma}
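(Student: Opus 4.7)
The goal is to establish exponential decay $\P(E_n^x) \leq C e^{-c n}$ uniformly in $x$, from which \eqref{sum_n_bd_1} follows immediately as a geometric series, and \eqref{sum_n_bd_2} follows because $|\{x \in \Z^2 : \|x\|_1 = n_k\}| = 4 n_k$ and $n_k \geq 2^{k-1}$, so any choice of $n_k \in (2^{k-1}, 2^k]$ makes $\sum_k 4 n_k \cdot C e^{-c n_k} < \infty$. Using right-continuity of the distribution function together with the hypotheses $\P(X_e = s) < p_c(\Z^2)$ from \eqref{fpp_assumption_1} or $\P(X_e = s) < \vec{p}_c(\Z^2)$ from \eqref{fpp_assumption_2a}, fix $\delta > 0$ small so that $p_\delta \coloneqq \P(X_e < s + 2\delta)$ lies strictly below $p_c(\Z^2)$ or $\vec{p}_c(\Z^2)$, respectively. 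Call an edge \emph{light} when $X_e < s + 2\delta$ and \emph{heavy} otherwise. Then $E_n^x$ implies the existence of $y \in \partial B_n(x)$ and a path from $x$ to $y$ using fewer than $\rho n$ heavy edges; equivalently, in the auxiliary bond-FPP $T^*$ in which light edges carry weight $0$ and heavy edges carry weight $1$, one has $T^*(x, y) < \rho n$.

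\emph{Case 1, under \eqref{fpp_assumption_1}.} The weights of $T^*$ are bounded i.i.d.\ with $\P(\text{weight} = 0) = p_\delta < p_c(\Z^2)$, so the Cox--Durrett shape theorem furnishes a norm $\mu^*$ on $\R^2$ whose unit ball has nonempty interior, giving $\mu^*(v) \geq c_* \|v\|_1$ for some $c_* > 0$. A Kesten-type exponential concentration estimate for FPP with bounded weights then yields $\P(T^*(x, y) < c_* \|y - x\|_1 / 2) \leq e^{-c \|y - x\|_1}$ once $\|y - x\|_1$ is sufficiently large. Picking $\rho < c_*/2$ and union-bounding over the $4n$ points $y \in \partial B_n(x)$ gives $\P(E_n^x) \leq C n e^{-c n}$.

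\emph{Case 2, under \eqref{fpp_assumption_2}.} Here $p_\delta$ may exceed $p_c$, so oriented percolation is required. First use $s > 0$ to bound the geodesic length: the inequality $T(\gamma) \geq s|\gamma|$ combined with the upper bound $T(x, y) \leq T_{\min}(x, y) \leq C\|y - x\|_1$, coming from the Cox--Durrett shape theorem applied to the i.i.d.\ edge weights $\min(X_e^{(1)}, \ldots, X_e^{(4)})$ (whose second moment is finite by \eqref{fpp_assumption_2b}), shows that any geodesic to distance $n$ has $|\gamma| \leq C_0 n$ with probability at least $1 - e^{-cn}$. After reducing by symmetry to the case $y - x \in \Z_+^2$, a direct count gives that the geodesic has at least $n$ edges oriented in the direction of $y - x$, so if fewer than $\rho n$ are heavy then at least $(1 - \rho) n$ \emph{forward light} edges appear among its at most $C_0 n$ edges. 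A coarse-graining / Peierls argument exploiting oriented bond subcriticality at density $p_\delta < \vec{p}_c(\Z^2)$ then yields exponential decay for the probability of such a configuration, completing the bound $\P(E_n^x) \leq C e^{-c n}$.

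\emph{Main obstacle.} The delicate step is Case 2: the FPP geodesic is undirected, whereas \eqref{fpp_assumption_2a} only gives oriented subcriticality of light edges, and the two must be bridged. The length bound $|\gamma| \leq C_0 n$, in which both $s > 0$ and the weak moment condition \eqref{fpp_assumption_2b} play essential roles, is crucial; and a careful renormalization is needed to extract an oriented subcritical structure from a geodesic that may meander backward. Once this is done, Lemma \ref{fpp_lemma} reduces in both cases to summing the exponential tail $\P(E_n^x) \leq C e^{-cn}$ as described in the opening paragraph.
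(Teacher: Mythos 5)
Your Case 1 is fine and close in spirit to the paper (which quotes Kesten's theorem that, when $\P(X_e<s+2\delta)<p_c(\Z^2)$, \emph{every} self-avoiding path of length $n$ from the origin has at least $\rho n$ heavy edges, with exponentially small failure probability). The problems are in Case 2, and they are substantive. First, your global plan requires $\P(E_n^x)\le C\e^{-cn}$, but under \eqref{fpp_assumption_2} no such exponential bound is available: with only $\E\min(X^{(1)},\dots,X^{(4)})^2<\infty$, the upper tail of the passage time decays only polynomially, since $\P(T(x,y)>C\|y-x\|_1)$ is at least the probability that all four edges incident to $x$ exceed $C\|y-x\|_1$. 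Hence your claim that the geodesic-length bound $|\gamma|\le C_0 n$ holds ``with probability at least $1-\e^{-cn}$'' is false in this generality, and with it the opening reduction (geometric series, \emph{any} choice of $n_k$) collapses for Case 2. This is precisely why the lemma asserts \eqref{sum_n_bd_2} only for \emph{some} sequence $n_k$: the paper obtains mere summability $\sum_n\P(E_n^0)<\infty$ in Case 2 and then takes $n_k=\argmin_{2^{k-1}<n\le 2^k}\P(E_n^0)$, so that $n_k\P(E_{n_k}^0)\le 2\sum_{2^{k-1}<n\le 2^k}\P(E_n^0)$ and the double sum in \eqref{sum_n_bd_2} is controlled by $\sum_n\P(E_n^0)$; your argument never supplies this device and cannot avoid needing it.

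Second, the core of your Case 2 --- turning an undirected geodesic containing at least $(1-\rho)n$ ``forward light'' edges into a subcritical oriented-percolation event --- is only asserted (``a coarse-graining / Peierls argument''). Forward light edges along a meandering path are not consecutive and do not form an oriented open path, and bridging this is exactly the hard content of the van den Berg--Kesten/Marchand line of work; as written there is no proof. The paper avoids any such renormalization: it compares $T$ with the passage time $\hat T$ of the modified weights in which values in $[s+2\delta,\,s+2\delta+M)$ are lifted to $s+2\delta+M$, invokes Marchand's theorem (Theorem \ref{marchand_thm}, which presupposes $\P(X_e=s)<\vec p_c(\Z^2)$) to get the strict inequality $\mu(y)<\hat\mu(y)$ of time constants, observes that on $E_n^0$ one has $\hat T(0,y)\le T(0,y)+\rho M n$ for the relevant $y$, so $E_n^0$ forces a linear-order deviation of $T$ or $\hat T$ from its norm, and then sums these deviation probabilities over all $y\in\Z^2$ using Ahlberg's theorem with $\alpha=2$ --- which is exactly what \eqref{fpp_assumption_2b} provides, and which yields summability rather than exponential decay. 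To repair your proposal you would either have to supply the missing renormalization \emph{and} weaken the target from exponential decay to summability (adding the argmin choice of $n_k$), or switch to the time-constant comparison route the paper takes.
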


\begin{remark}
As will be seen in the proof, the restriction of Lemma \ref{fpp_lemma} to geodesics is only necessary when assuming \eqref{fpp_assumption_2} without \eqref{fpp_assumption_1}.
\end{remark}

We will need two results from the literature.
The first theorem below was originally established by van den Berg and Kesten \cite{vandenberg-kesten93} when $y = (1,0)$, and later generalized by Marchand \cite{marchand02}.

\begin{thm}[{Marchand \cite[Theorem 1.5(ii)]{marchand02}}] \label{marchand_thm}
Let $(X_e)_{e\in E(\Z^2)}$ and $(\hat X_e)_{e\in E(\Z^2)}$ be two i.i.d.~families of nonnegative random variables, such that $\hat X_e$ stochastically dominates $X_e$.
Let $\mu$ and $\hat \mu$ be the respective limiting norms, given by \eqref{mu_def}.
If $\P(X_e=s) < \vec{p}_c(\Z^2)$, then $\mu(y) < \hat \mu(y)$ for all $y \neq 0$.
\end{thm}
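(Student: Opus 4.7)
The plan is to upgrade the weak inequality $\mu\leq\hat\mu$, which follows from any monotone coupling, to strict inequality by showing that the $\hat X$-geodesic from $0$ to $[ny]$ is forced to cross a linear number of edges on which $\hat X_e$ strictly exceeds $X_e$; rerouting this geodesic in the $X$ environment then saves order $n$ of passage time, yielding $\mu(y)<\hat\mu(y)$. The hypothesis $\P(X_e=s)<\vec{p}_c(\Z^2)$ enters because the edges at the essential infimum, where no rerouting gain is available, would obstruct improvements if they percolated in an oriented sense; the subcritical assumption ensures they do not.

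First I would construct a monotone coupling via the quantile transform so that $X_e\leq\hat X_e$ pointwise. Since stochastic domination is (implicitly) strict, there exist $\delta>0$ and $p_0>0$ with $\P(\hat X_e-X_e\geq\delta)\geq p_0$; call such an edge a \emph{savings} edge, and call an edge \emph{minimal} when $X_e=s$. Next I would tile $\Z^2$ into disjoint $L\times L$ boxes and declare a box \emph{good} if it contains a savings edge and admits no oriented crossing by minimal edges. Because the minimal-edge density is strictly below $\vec{p}_c(\Z^2)$, the oriented-percolation exponential estimate gives that the probability of being good tends to $1$ as $L\to\infty$; and since the good indicator is finite-range on the coarsened lattice, it stochastically dominates a supercritical site-percolation field.

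The second and harder step is the surgery. In each good box entered and exited by the $\hat X$-geodesic $\hat\gamma$, the absence of an oriented minimal crossing, together with planar duality, produces a short alternative segment inside the box that threads a savings edge. Splicing this segment into $\hat\gamma$ yields a new path whose $X$-passage time is at most $T_{\hat X}(\hat\gamma)-\delta$. Applying this to the $\Omega(n/L)$ good boxes that any path from $0$ to $[ny]$ must cross---a LLN for the good field along macroscopic paths, established by a standard box-counting argument---yields
\[
T_X(0,[ny])\leq T_{\hat X}(0,[ny])-cn
\]
for some $c>0$. Dividing by $n$ and passing to the limit gives the desired strict inequality.

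The main obstacle is making the surgery genuinely local and uniform in the directions through which $\hat\gamma$ enters and exits each good box. Marchand handles this through a duality argument specific to oriented percolation on $\Z^2$: the failure of a directed minimal crossing forces the existence of a dual directed path of non-minimal edges separating entry from exit, and this dual path can be leveraged to realize the $\delta$ savings. Converting this duality into an explicit, bounded-length reroute and chaining many such surgeries without cascading geometric costs is the technical core of the argument, and is the reason the critical value in the hypothesis must be precisely $\vec{p}_c(\Z^2)$ rather than the strictly smaller $p_c(\Z^2)$.
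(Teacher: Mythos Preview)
The paper does not prove this theorem; it is quoted from the literature (van den Berg--Kesten for the axis direction, Marchand for general directions) and used as a black box in the proof of Lemma~\ref{fpp_lemma}. There is therefore no ``paper's own proof'' to compare against.

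Your sketch is in the right spirit---the van den Berg--Kesten/Marchand argument is indeed a renormalization with local surgery---but a few points deserve caution. First, the theorem as stated needs the stochastic domination to be non-trivial (otherwise $\mu=\hat\mu$ trivially); you correctly note this is implicit, but it should be made explicit. Second, your appeal to ``planar duality'' for oriented percolation is not quite right: directed site/bond percolation on $\Z^2$ does not enjoy the clean self-duality that undirected bond percolation does, and the actual mechanism in Marchand's proof is different. The role of the hypothesis $\P(X_e=s)<\vec p_c(\Z^2)$ is that, by results on the flat edge of the limit shape, subcriticality of the minimal-weight edges for \emph{oriented} percolation guarantees that the time constant strictly exceeds $s\|y\|_1$ in every direction, which is what allows the comparison argument to produce a linear gain. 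Third, the surgery is not done box-by-box on the $\hat X$-geodesic in quite the way you describe; rather, one compares $T_X$ to a modified environment and uses the known strict monotonicity of the time constant under ``useful'' edge modifications. If you want to reconstruct the full proof, the cleanest route is to read van den Berg--Kesten first (where the geometry is simpler) and then see how Marchand extends it off-axis using the characterization of the flat edge.
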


The next theorem demonstrates why \eqref{fpp_assumption_2b} is necessary when \eqref{fpp_assumption_1} is not assumed.
The version stated in \cite{ahlberg15} uses $\|\cdot\|_2$ in place of $\|\cdot\|_1$, but this makes no difference because all norms on $\R^2$ are equivalent.

\begin{thm}[{Ahlberg \cite[Theorem 1]{ahlberg15}}] \label{ahlberg_thm}
For every $\alpha, \eps>0$,
\eq{
\E\min(X^{(1)},X^{(2)},X^{(3)},X^{(4)})^\alpha<\infty \quad \iff \quad
\sum_{y\in\Z^2}\|y\|_1^{\alpha-2}\P(|T(0,y) - \mu(y)| > \eps\|y\|_1) < \infty,
}
where the $X^{(i)}$'s are independent copies of $X_e$.

\end{thm}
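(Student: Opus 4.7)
The plan is to establish the two implications of the equivalence separately. Write $M \coloneqq \min(X^{(1)},X^{(2)},X^{(3)},X^{(4)})$, and observe that for any $y\in\Z^2$ the minimum of the four edge weights incident to $y$ has the same law as $M$.

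For the easier \emph{necessary} direction (summability $\Rightarrow \E M^\alpha < \infty$), I would argue from a deterministic lower bound. Every path from $0$ to $y \neq 0$ traverses at least one edge incident to $y$, so by nonnegativity of the weights $T(0,y) \geq \min_{e \ni y} X_e$. Since $\mu$ is a norm on $\R^2$, one has $\mu(y) \leq C\|y\|_1$ for some finite $C$, and thus
\eq{
\P\bigl(T(0,y) - \mu(y) > \eps\|y\|_1\bigr) \geq \P\bigl(\min_{e \ni y} X_e > (C+\eps)\|y\|_1\bigr).
}
Grouping by the $\Theta(n)$ vertices in each shell $\|y\|_1 = n$, the summability hypothesis forces $\sum_{n\geq 1} n^{\alpha-1}\P(M > (C+\eps)n) < \infty$; a standard comparison with the tail integral $\int_0^\infty t^{\alpha-1}\P(M>t)\,\mathrm{d}t$ (and a rescaling in $t$) shows this is equivalent to $\E M^\alpha < \infty$.

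For the harder \emph{sufficient} direction ($\E M^\alpha < \infty \Rightarrow$ summability) my approach is truncation plus concentration. Fix a truncation level $K$, to be chosen depending on $\|y\|_1$; set $\hat X_e \coloneqq X_e \wedge K$, and let $\hat T$ be the corresponding passage time with limit $\hat\mu$. The deviation event $\{|T(0,y) - \mu(y)| > \eps\|y\|_1\}$ is contained in the union of three pieces: a concentration event $\{|\hat T(0,y) - \E\hat T(0,y)| > \tfrac\eps 3\|y\|_1\}$, a mean-gap event $\{|\E\hat T(0,y) - \mu(y)| > \tfrac\eps 3\|y\|_1\}$, and an untruncation event $\{T(0,y) - \hat T(0,y) > \tfrac\eps 3\|y\|_1\}$. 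The first I would handle by standard concentration inequalities for bounded-weight FPP (Kesten, Bena\"im--Rossignol, Damron--Hanson--Sosoe), producing exponential-type tails on scale $\sqrt{\|y\|_1}$ that are vastly summable against $\|y\|_1^{\alpha-2}$. The mean-gap event is a deterministic consequence of $\hat\mu \to \mu$ as $K\to\infty$, which follows by dominated convergence under the moment hypothesis and is made quantitative by letting $K(\|y\|_1)\to\infty$ at a suitable rate.

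The main obstacle lies in bounding the untruncation gap, and this is where the min-of-four condition is essential. The guiding idea is that an edge $e$ with $X_e > K$ only hurts the passage time when every alternative re-route also crosses a large-weight edge; since at each vertex the geodesic may select among four incident edges, a ``forced'' crossing essentially requires that a local minimum of four competing edge weights exceeds $K$. A BK-style disjoint-occurrence inequality combined with a careful geometric decomposition of the candidate re-routes into independent ``corner units,'' each witnessing a min-of-four, should yield a bound of the form
\eq{
\P\bigl(T(0,y) - \hat T(0,y) > \tfrac\eps 3\|y\|_1\bigr) \lesssim \|y\|_1^{p}\,\P(M > K)
}
for some fixed polynomial power $p$. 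Tuning $K=K(\|y\|_1)$ polynomially in $\|y\|_1$ and invoking the moment hypothesis on $M$ (via the same tail-integral identity as in the necessary direction) then balances everything against the weight $\|y\|_1^{\alpha-2}$ and renders the final sum finite. Making this rerouting estimate quantitatively sharp, and identifying the correct exponent $p$, is the crux of the argument where I expect the bulk of the technical effort.
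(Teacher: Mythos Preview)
This theorem is not proved in the paper at all: it is stated as an external result, attributed to Ahlberg \cite[Theorem 1]{ahlberg15}, and then invoked (with $\alpha=2$) in the proof of Lemma~\ref{fpp_lemma}. There is therefore no ``paper's own proof'' to compare your proposal against.

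As for the proposal itself: the necessary direction is fine and is essentially the standard argument. The sufficient direction, however, is only a plan, and the plan does not match how Ahlberg actually proves the result. Ahlberg's argument does not proceed by truncating the weights and controlling an ``untruncation gap'' via BK-type rerouting estimates; rather, the key input is a quantitative large-deviation estimate for $T(0,y)$ in terms of the tail of $M$ (built on Kesten's and Cox--Durrett's techniques for comparing $T$ to an auxiliary passage time with bounded weights along a shell construction), which directly yields summable tail bounds. Your rerouting heuristic --- that a forced crossing of a large edge ``essentially requires that a local minimum of four competing edge weights exceeds $K$'' --- is morally in the right spirit, but as written it is not a proof: the BK decomposition you allude to is not specified, the polynomial exponent $p$ is not identified, and the balancing of $K(\|y\|_1)$ against both the concentration term and the mean-gap term $|\E\hat T(0,y)-\mu(y)|$ (which is not obviously quantitative under only $\E M^\alpha<\infty$) is left entirely open. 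If you want to supply a proof, you should consult Ahlberg's paper directly rather than reinvent the argument.
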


\begin{proof}[Proof of Lemma \ref{fpp_lemma}]
We handle the cases of \eqref{fpp_assumption_1} and \eqref{fpp_assumption_2} separately.

\textbf{Case 1: Assuming \eqref{fpp_assumption_1}.}
Choose $\delta > 0$ small enough that $\P(X_e < s+ 2\delta) < p_c(\Z^2) = 1/2$.
Consider the first-passage percolation when each $X_e$ is replaced by
\eq{
\hat X_e \coloneqq 
\begin{cases}
0 &\text{if $X_{e} < s+2\delta$,} \\
1 &\text{ otherwise.}
\end{cases}
}
Let $\hat T$ be the associated passage time, so that $\hat T(x,y)$ is simply the minimum number of edges $e$ satisfying $X_e \geq s+2\delta$ in a path from $x$ to $y$.
By \cite[Theorem 1]{kesten80}, there exists $\rho$ small enough that with probability tending to $1$ exponentially quickly in $n$, every self-avoiding path $\gamma$ starting at the origin that has length at least $n$ --- not just those terminating at $\partial B_n(0)$ --- has $\hat T(\gamma) \geq \rho n$.
That is, $\P(E_n^0) \leq a\e^{-bn}$ for some $a,b>0$, which easily gives
\eq{
\sum_{n=1}^\infty n\P(E_n^0) < \infty.
}
In particular, \eqref{sum_n_bd_1} is true, and  \eqref{sum_n_bd_2} holds for any increasing sequence $n_k\to\infty$, since $|\partial B_n(0)| = 4n$ for every $n\geq1$.

\textbf{Case 2: Assuming \eqref{fpp_assumption_2}.}
Recall that \eqref{fpp_assumption_2b} implies the existence of the finite limit \eqref{mu_def} for every $x\in\R^2$.
By \eqref{fpp_assumption_2a}, we can choose $\delta > 0$ small enough that $\P(X_e < s+2\delta) < \vec{p}_c(\Z^2)$.
Next we choose $M$ large enough that $\P(s+2\delta \leq X_e < s+2\delta+M) \geq 1/4$, which is possible because of \eqref{oriented_bond_upper}.
Consider the first-passage percolation model where each $X_e$ is replaced by
\eq{
\hat X_e \coloneqq \begin{cases}
s+2\delta + M &\text{if } s+2\delta\leq X_e< s+2\delta+M, \\
X_e &\text{otherwise.}
\end{cases}
}
Let $\hat T$ and $\hat \mu$ be the associated passage time and limiting norm.
We also define
\eq{
\mu_\mathrm{min} &\coloneqq \min\{\mu(y) : y\in\R^2, \|y\|_1=1\},
}
which is positive because $s>0$, 
and finite because of \eqref{fpp_assumption_2b}.
Because of our choice of $\delta$ and $M$, Theorem \ref{marchand_thm}
guarantees $\mu(y) < \hat \mu(y)$ for every nonzero $y \in \R^2$.
By compactness and continuity of $\mu$ and $\hat\mu$,
there is $\eps_1 > 0$ such that $\mu(y)(1 + \eps_1) < \hat \mu(y)(1-2\eps_1)$ for every $y$ with $\|y\|_1 = 1$.
By scaling, the same inequality holds for all $y\neq 0$.
Therefore, if we set $\eps_2 \coloneqq \eps_1\min(\mu_\mathrm{min},1)$,
then for all $y\in \partial B_n(0)$,
\eeq{ \label{scaling_ineq}
\mu(y)(1+\eps_1) + \eps_2 n
&< \hat\mu(y)(1-2\eps_1) + \eps_2 \|y\|_1 \\
&\leq \hat\mu(y)(1-2\eps_1) + \eps_1\mu(y) \\
&< \hat\mu(y)(1-2\eps_1) + \eps_1\hat \mu(y)
= \hat\mu(y)(1-\eps_1).
}
Finally, choose $\rho \in (0,1)$ such that $\rho M < \eps_2$.

Now consider any $y\in \partial B_n(0)$.
If there exists a geodesic $\gamma$ (with respect to $T$) from $0$ to $y$ such that \eqref{bad_proportion} holds, then $\gamma$ contains fewer than $\rho n$ edges $e$ such that $\hat X_e \neq X_e$.
Moreover, for each such edge, we have $\hat X_e\leq X_e + M$.
Therefore,
\eq{
\hat T(0,y) \leq \sum_{i=1}^N \hat X_{(\gamma_{i-1},\gamma_i)} \leq T(0,y) + n\rho M < T(0,y)+\eps_2 n.
}
But in light of \eqref{scaling_ineq},
\eq{
\{T(0,y) \leq \mu(y)(1+\eps_1)\}\cap\{\hat\mu(y)(1-\eps_1) \leq \hat T(0,y)\} \subset \{T(0,y) + \eps_2n \leq \hat T(0,y)\}.
}
From these observations, we see
\eeq{ \label{complement_containment}
E_n^0 \subset
\bigcup_{\|y\|_1=n} \{T(0,y) > \mu(y)(1+\eps_1)\}\cup\{\hat T(0,y) < \hat\mu(y)(1-\eps_1)\},
}
and hence 
\eq{
\P(E_n^0) &\leq \sum_{\|y\|_1=n} \big[\P\big(T(0,y) - \mu(y) > \eps_1\mu(y)\big) + \P\big(\hat T(0,y) - \hat\mu(y) < -\eps_1\hat\mu(y)\big)\big] \\
&\leq \sum_{\|y\|_1=n} \big[\P\big(T(0,y) - \mu(y) > \eps_2\|y\|_1\big) + \P\big(\hat T(0,y) - \hat\mu(y) < -\eps_2\|y\|_1\big)\big].
}
By Theorem \ref{ahlberg_thm} with $\alpha=2$, 
\eqref{fpp_assumption_2b} gives
\eq{
\sum_{y\in\Z^2} \big[\P\big(T(0,y) - \mu(y) > \eps_2\|y\|_1\big) + \P\big(\hat T(0,y) - \hat\mu(y) < -\eps_2\|y\|_1\big)\big] < \infty.
}
Now \eqref{sum_n_bd_1} follows from the previous two displays.
To conclude \eqref{sum_n_bd_2}, we take
\eq{
n_k \coloneqq \argmin_{2^{k-1}<n\leq2^k} \P(E_n^0).
}
Note that by translation invariance, $\P(E_n^x) = \P(E_n^0)$ for all $x\in\Z^2$.
Again using the fact that $|\partial B_n(0)| = 4n$ for all $n\geq1$, we have 
\eq{
\sum_{k=1}^\infty\sum_{\|x\|_1 = n_k} \P(E_{n_k}^x) = 4\sum_{k=1}^\infty n_k\P(E_{n_k}^0)
\leq 8\sum_{k=1}^\infty2^{k-1}\P(E_{n_k}^0)
&\leq 8\sum_{k=1}^\infty \sum_{n=2^{k-1}+1}^{2^{k}} \P(E_n^0) \\
&= 8\sum_{n=2}^\infty \P(E_n^0) < \infty.
}
\end{proof}

\begin{proof}[Proof of Theorems \ref{fpp_thm} and \ref{fpp_thm_1}] \hspace{1in}\\[-0.5\baselineskip]

\textbf{Part 1.} Let $T_n = T(0,y_n)$.
From Lemma \ref{fpp_lemma}, take $\delta > 0$, $\rho \in (0,1)$, and $(n_k)_{k=1}^\infty$ satisfying $2^{k-1}<n_k\leq2^k$, such that \eqref{sum_n_bd_2} holds.
Then choose $k_0$ large enough that
\eq{
\sum_{k=k_0}^{\infty} \sum_{\|x\|_1=n_k} \P(E_{n_k}^x) \leq \frac{1}{7}.
}
Define the event 
\eeq{ \label{G_0_def}
G_0 \coloneqq \bigcap_{k=k_0}^{\infty} \bigcap_{\|x\|_1=n_k} (E^x_{n_k})^\mathrm{c},
}
so that
\eeq{
\P(G_0) \geq \frac{6}{7}. \label{G_bd}
}
Finally, choose $m$ large enough that if $X_e^{(1)},\dots,X_e^{(m)}$ are independent copies of $X_e$, then 
\eeq{
\P(\min(X_e^{(1)},\dots,X_e^{(m)})\leq s+\delta) > 1 - \Big(\frac{1}{3}\Big)^{1/\rho}. \label{m_choice}
}
Throughout the rest of the proof, $C$ will denote a constant that may depend on $m$ and $\l_{X}$, but nothing else.
Its value may change from line to line or within the same line.
To condense notation, we will also define 
\eeq{ \label{prime_definitions}
X_e' \coloneqq \min(X_e,X_e^{(1)},\dots,X_e^{(m)}), \qquad Z_e \coloneqq X_e - X_e', \qquad W_e \coloneqq 1 - \e^{-Z_e},
}
where $(X_e^{(j)})_{e\in E(\Z^2)}$, $1\leq j\leq m$, are independent copies of the i.i.d.~edge weights.

Given any realization of the percolation, the subgraph of $\Z^2$ induced by the geodesics between all pairs of points in $B_{2n}(0)$ is finite and connected.
Therefore, we can choose one of its spanning trees according to some arbitrary, deterministic rule.
From that tree we have a distinguished geodesic for each $x,y\in B_{2n}(0)$.
Moreover, if $x'$ and $y'$ lie along the geodesic from $x$ to $y$, then the distinguished geodesic from $x'$ to $y'$ is the relevant subpath. 

Given $c>0$ to be chosen later, consider the event $F_n$ that there exist $x\in  \partial B_n(0)$ and $y\in\partial B_{2n}(0)$ whose distinguished geodesic --- which we denote by its edges $(e_1,\dots,e_N)$ in a slight abuse of notation --- satisfies
\eeq{ \label{defining_F}
\sum_{i=1}^N W_{e_i} \leq cn.
}
For a given $x\in B_n(0)$, if $E_n^x$ does not occur, then any geodesic from $x$ to any $y\in\partial B_n(x)$ contains at least $\rho n$ edges satisfying $X_{e_i} \geq s + 2\delta$.
Furthermore, because $\|x\|_1=n$, every geodesic from $x$ to $\partial B_{2n}(0)$ must pass through $\partial B_n(x)$.
Therefore, if $E_n^x$ does not occur, then any geodesic from $x$ to $\partial B_{2n}(0)$ contains at least $\rho n$ edges satisfying $X_{e_i} \geq s + 2\delta$.

It will be convenient to define
\eq{
U_{e} \coloneqq \begin{cases}
1 &\text{if } \min(X_e^{(1)},\dots,X_e^{(m)}) \leq s + \delta, \\
0 &\text{otherwise.}
\end{cases}
}
The reason for doing so is that now the $U_e$'s are mutually independent and independent of $\sigma(X)$, the $\sigma$-algebra generated by the $X_e$'s.
In addition, if $(e_1,\dots,e_N)$ is the distinguished geodesic between some fixed $x\in\partial B_n(0)$ and $y\in\partial B_{2n}(0)$, then from the observation
\eq{
X_{e}\geq s+2\delta,\ U_{e} = 1 \quad \implies \quad Z_{e} \geq \delta \quad \implies \quad W_{e} \geq 1-\e^{-\delta},
}
we see
\eq{
\sum_{i=1}^N \one_{\{X_{e_i}\geq s+2\delta\}}\one_{\{U_{e_i}=1\}}(1-\e^{-\delta}) \leq \sum_{i=1}^N W_{e_i}.
}
By the discussion of the previous paragraph, if $E_n^x$ does not occur, then there is a subsequence $1\leq i_1<i_2<\cdots<i_{\ceil{\rho n}}\leq N$ such that $X_{e_{i_\ell}}\geq s+2\delta$ for each $\ell=1,\dots,\ceil{\rho n}$.
With this notation, we have
\eq{
\P\givenp[\bigg]{\sum_{i=1}^N W_{e_i} \leq cn}{\sigma(X)}\one_{(E_n^x)^\mathrm{c}}
&\leq \P\givenp[\bigg]{\sum_{i=1}^N \one_{\{X_{e_i}\geq s+2\delta\}}\one_{\{U_{e_i}=1\}}(1-\e^{-\delta})\leq cn}{\sigma(X)}\one_{(E_n^x)^\mathrm{c}} \\
&\leq \P\givenp[\bigg]{\sum_{\ell=1}^{\ceil{\rho n}} \one_{\{U_{e_{i_\ell}}=1\}}(1-\e^{-\delta})\leq cn}{\sigma(X)}\one_{(E_n^x)^\mathrm{c}} \\
&\leq \phi(t)^{\ceil{\rho n}}\exp\Big\{\frac{cnt}{1-\e^{-\delta}}\Big\} \quad \text{for any $t > 0$,}
}
where
\eq{
\phi(t) \coloneqq \E(\e^{-t U_e}) = \P(U_e=0) + \e^{-t}\P(U_e=1) 
\stackrel{\mbox{\footnotesize\eqref{m_choice}}}{<} \Big(\frac{1}{3}\Big)^{1/\rho} +  \e^{-t}\Big(1-\frac{1}{3^{1/\rho}}\Big). 
}
We can choose $t$ sufficiently large that $\phi(t)^\rho \leq 1/3$.
Then setting $c = (1-\e^{-\delta})t^{-1}$, we have 
\eq{
\P\givenp[\bigg]{\sum_{i=1}^N W_{e_i} \leq cn}{\sigma(X)}\one_{(E_n^x)^\mathrm{c}} \leq \frac{\e^n}{3^n}.
}
We now use this estimate to bound the conditional probability of the event $F_n$ defined via \eqref{defining_F}.
Since $|\partial B_n(0)| = 4n$ and $|\partial B_{2n}(0)| = 8n$, a union bound gives
\eeq{ \label{conditional_F_bd}
\P\givenp{F_n}{\sigma(X)}\one_{\{\bigcap_{\|x\|_1= n}(E_n^x)^\mathrm{c}\}} \leq \frac{32n^2\e^n}{3^n} \quad \text{for all $n\geq1$.}
}
Now we choose an even integer $k_1\geq k_0$ sufficiently large that
\eeq{ \label{sum_with_n_k}
32\sum_{k=k_1}^\infty \frac{n_k^2\e^{n_k}}{3^{n_k}} \leq \frac{1}{8},
}
and define the event
\eeq{ \label{G_def}
G \coloneqq \bigcap_{k=k_1}^{\infty} F_{n_k}^\mathrm{c}.
} 
Recall the event $G_0 \in \sigma(X)$ defined in \eqref{G_0_def}.
The above discussion yields
\eq{ 
\P\givenp{G}{\sigma(X)}\one_{G_0} 
&\stackrel{\phantom{{\mbox{\footnotesize{\eqref{conditional_F_bd}}}}}}{\geq} \bigg(1-\sum_{k=k_1}^{\infty} \P\givenp{F_{n_k}}{\sigma(X)} \bigg)\one_{G_0} \\
&\stackrel{\phantom{{\mbox{\footnotesize{\eqref{conditional_F_bd}}}}}}{=} \bigg(1-\sum_{k=k_1}^{\infty} \P\givenp{F_{n_k}}{\sigma(X)} \bigg)\prod_{k=k_0}\one_{\{\bigcap_{\|x\|_1=n_k} (E_{n_k}^x)^\mathrm{c}\}} \\
&\stackrel{{\mbox{\footnotesize{\eqref{conditional_F_bd}}}}}{\geq} \bigg(1-32\sum_{k=k_1}^\infty \frac{n_k^{2}\e^{n_k}}{3^{n_k}}\bigg)\prod_{k=k_0}\one_{\{\bigcap_{\|x\|_1=n_k} (E_{n_k}^x)^\mathrm{c}\}} \\
&\stackrel{{\mbox{\footnotesize{\eqref{sum_with_n_k}}}}}{\geq} \frac{7}{8}\one_{G_0}.
}
It now follows from \eqref{G_bd} that
\eeq{ \label{Gprime_bd}
\P(G) \geq \frac{7}{8}\P(G_0) \geq \frac{3}{4}.
}
Having chosen $k_1$, we will assume $n$ satisfies
\eeq{ \label{n_vs_k}
\floor{(\log_2 n)/2} \geq k_1+1.
}

\textbf{Part 2.} For each edge $e$, let $\|e\|$ denote its distance from the origin, i.e.~the graph distance from $0$ to the closest endpoint of $e$.
For each $e$ with $\|e\| \leq n$, set
\eeq{ \label{edge_eps_def}
\eps_e \coloneqq \frac{\alpha}{(\|e\|+1)\sqrt{\log n}},
}
where $\alpha$ is a constant to be chosen below.
For each such $e$, define $\wt X_e$ as in \eqref{new_X} with $\eps = \eps_e$ and
$X_e'$ given in \eqref{prime_definitions}.
Let $\wt T_n = \wt T(0,y_n)$ be the passage time if $X_e$ is replaced by $\wt X_e$ whenever $\|e\|\leq n$.
Because there are at most $C(i+1)$ edges $e$ with $\|e\| = i$, we have
\eq{
\sum_{\|e\| \leq n} \eps_e^2 = \frac{\alpha^2}{\log n}\sum_{i=1}^n\sum_{\|e\|=i} \frac{1}{(i+1)^2}
\leq \frac{\alpha^2}{\log n} \sum_{i=1}^n \frac{C}{i+1} \leq C\alpha^2.
}
Hence, by Lemma \ref{tv_lemma},
\eq{
d_{\tv}(\l_{T_n},\l_{\wt T_n}) \leq C\alpha.
}
Choose $\alpha$ so that
\eeq{
d_{\tv}(\l_{T_n},\l_{\wt T_n}) \leq \frac{1}{4}.\label{FPP_final_1}
}
Now we aim to show that with sufficiently large probability,
$T_n-\wt T_n$ is of order $\sqrt{\log n}$.
Let $e_1,\dots,e_N$ be a geodesic from $0$ to $y_n$, chosen according to same deterministic rule as before. 
Note that necessarily $N\geq \|y\|_1 \geq n$.
We will use the notation $e_i = (x_{i-1},x_i)$ to denote endpoints of $e_i$ in the order traversed by the geodesic.
For each $k = 1,\dots,\floor{(\log_2 n)/2}$, let $i_k$ be the first index such that $\|x_{i_k}\|_1 = n_{2k}$, where the $n_k$'s were chosen in Part 1 and satisfy $2^{k-1}<n_k\leq2^k$.
Observe that
\eeq{ \label{edge_distance_bd}
\|e_i\| \leq n_{2k}-1\leq 4^k-1 \quad \text{for every $i\leq i_{k}$}.
}
Furthermore, $(e_{{i_k}+1},\dots,e_{i_{k+1}})$ is a geodesic from $x_{i_k} \in B_{n_{2k}}(0)$ to $x_{i_{k+1}} \in B_{n_{2k+2}}(0)$, where $n_{2k+2} > 2^{2k+1} \geq 2n_k$.
Therefore, on the event $G$ defined in \eqref{G_def},
\eq{
\sum_{i=i_k+1}^{i_{k+1}} W_{e_i} 
\geq cn_{2k} > c2^{2k-1} \quad \text{for all } k = k_1/2,\dots,\floor{(\log_2 n)/2}-1.
}
which implies
\eeq{ \label{G_consequence}
\sum_{i=1}^N \eps_{e_i}W_{e_i} 
\geq \sum_{i=i_{k_1/2}+1}^N \eps_{e_i}W_{e_i} 
&\stackrel{\phantom{\mbox{\footnotesize\eqref{edge_distance_bd}}}}{\geq} \sum_{k=k_1/2}^{\floor{(\log_2 n)/2}-1} \sum_{i=i_k+1}^{i_{k+1}} \eps_{e_i}W_{e_i} \\
&\stackrel{\mbox{\footnotesize\eqref{edge_distance_bd}}}{\geq} \sum_{k=k_1/2}^{\floor{(\log_2 n)/2}-1} \frac{\alpha}{4^{k+1}\sqrt{\log n}} \sum_{i=i_k+1}^{i_{k+1}} W_{e_i} \\
&\stackrel{\phantom{\mbox{\footnotesize\eqref{edge_distance_bd}}}}{\geq} \frac{\alpha}{\sqrt{\log n}}\sum_{k=k_1/2}^{\floor{(\log_2 n)/2}-1}\frac{c2^{2k-1}}{4^{k+1}} \\
&\stackrel{\phantom{\mbox{\footnotesize\eqref{edge_distance_bd}}}}{=} \frac{\alpha c(\floor{(\log_2 n)/2}-k_1/2)}{8\sqrt{\log n}}\\
&\stackrel{\mbox{\footnotesize\eqref{n_vs_k}}}{\geq} \frac{\alpha c\sqrt{\log n}}{16\log 2} \eqqcolon \theta\sqrt{\log n}. \raisetag{5\baselineskip}
}
Denote by $\sigma(X,X^{(1)},\dots,X^{(m)})$ the $\sigma$-algebra generated by the $X_e$'s and $X_e^{(j)}$'s, $1\leq j\leq m$.
Recall that each $\wt X_{e_i}$ is equal to $\min(X_{e_i},X^{(1)}_{e_i},\dots,X^{(m)}_{e_i}) = X_{e_i}-Z_{e_i}$ independently with probability $\eps_{e_i}$, and equal to $X_{e_i}$ otherwise.
In the former case, the value of $\wt T_n$ is lowered relative to $T_n$ by at least $Z_{e_i}$; in the latter case, no change occurs.
Therefore,
\eq{
T_n - \wt T_n\geq \sum_{i=1}^N \one_{\{Y_{e_i}=1\}}Z_{e_i} \eqqcolon D,
}
where the $Y_{e_i}$'s are Bernoulli($\eps_{e_i}$) random variables independent of each other and independent of $\sigma(X,X^{(1)},\dots,X^{(m)})$.
It follows that for any $t\geq0$,
\eq{
\P\givenp[\big]{D\leq t}{\sigma(X,X^{(1)},\dots,X^{(m)})}
&\leq \e^{t}\E\givenp[\big]{\e^{-D}}{\sigma(X,X^{(1)},\dots,X^{(m)})} \\
&= \e^{t}\prod_{i=1}^N (1-\eps_{e_i} + \eps_{e_i}\e^{-Z_{e_i}}) \\
&\leq \e^{t}\prod_{i=1}^N \exp\{-\eps_{e_i}(1-\e^{-Z_{e_i}})\} \\
&= \e^{t}\exp\bigg\{-\sum_{i=1}^N \eps_{e_i}W_{e_i}\bigg\}.
}
Therefore, on the event $G$, \eqref{G_consequence} shows
\eq{
\P\givenp[\Big]{D\leq\frac{\theta}{2}\sqrt{\log n}}{\sigma(X,X^{(1)},\dots,X^{(m)})}\one_G \leq \e^{-\frac{\theta}{2}\sqrt{\log n}}.
}
Assuming $n$ is large enough that
\eeq{ \label{n_easy}
\e^{-\frac{\theta}{2}\sqrt{\log n}} \leq \frac{1}{2},
}
we have
\eeq{ \label{single_path}
\P\givenp[\Big]{D>\frac{\theta}{2}\sqrt{\log n}}{\sigma(X,X^{(1)},\dots,X^{(m)})} \geq \frac{1}{2}\one_G,
}
and thus
\eeq{ \label{FPP_final_2}
\P\Big(T_n - \wt T_n > \frac{\theta}{2}\sqrt{\log n}\Big) \geq \P\Big(D>\frac{\theta}{2}\sqrt{\log n}\Big)\geq \frac{1}{2}\P(G)\stackrel{\mbox{\footnotesize{\eqref{Gprime_bd}}}}{\geq}\frac{3}{8}.
}
Using \eqref{FPP_final_1} and \eqref{FPP_final_2} in Lemma \ref{fluctuation_lemma}, we see that $T_n$ has fluctuations of order at least $\sqrt{\log n}$.
\end{proof}

\subsection{Proof of Theorem \ref{fpp_thm_2}}
Recall Definition \ref{curvature_def} for a direction of curvature, as well as the exponent $\chi'$ from \eqref{chi_prime_def}.

\textbf{Part 1.}
Fix any unit vector $x$ that is a direction of curvature for $\b$, and fix any $\delta > 0$.
We will write $T_n = T(0,[nx])$, where $[y]$ denotes the unique element of $\Z^2$ such that $y\in[y]+[0,1)^d$.
Let $L$ be the line passing through $0$ and $x$, and let $\Lambda_n$ be the cylinder of width $n^{3/4+\delta}$ centered about~$L$:
\eq{
\Lambda_n \coloneqq \{z \in \Z^2: d(z,L) \leq n^{3/4+\delta}\},
}
where $d(z,L) = \inf\{\|z-y\|_2 : y\in L\}$.
Under the given assumptions, \cite[Theorem 1.2]{damron-kubota16} guarantees $\chi' \leq 1/2$.
It then follows from \cite[Theorem 6 and (2.21)]{newman-piza95} that there exists $q_0 \in (0,1]$ such that with probability at least $q_0$, the following event, which we call $G_1$, is true:
For all large $n$, all geodesics from the origin to $[nx]$ lie entirely inside~$\Lambda_n$.

We would like to replace $\Lambda_n$ with a finite set.
To do so, we let $L_n$ be the line segment connecting $0$ and $nx$, and then introduce
\eq{
V_n \coloneqq \{z \in \Z^2 : d(z,L_n) \leq n^{3/4+2\delta}\}.
}
Suppose toward a contradiction that $G_1$ occurs but there exists a geodesic from $0$ to $[nx]$ that remains inside $\Lambda_n$ but not $V_n$.
Observe that from any $z \in \Lambda_n \setminus V_n$, the closest point on $L_n$ is either $0$ or $[nx]$.
Consequently, it follows from our supposition that from one of the endpoints of $L_n$ (say $0$, for concreteness), there are points $z_1$ within distance $n^{3/4+\delta}$ and $z_2$ at distance at least $n^{3/4+2\delta}$, such that $T(0,z_1) \geq T(0,z_2)$; see Figure \ref{fpp_fig}.
By the shape theorem, this inequality can only happen for finitely many $n$.
From this argument we conclude that with probability at least $q_0$, the following event, which we call $G_2$, is true:
For all large $n$, all geodesics from the origin to $[nx]$ lie entirely inside $V_n$.

\begin{figure}[h!]
\centering
\includegraphics[width=0.8\textwidth]{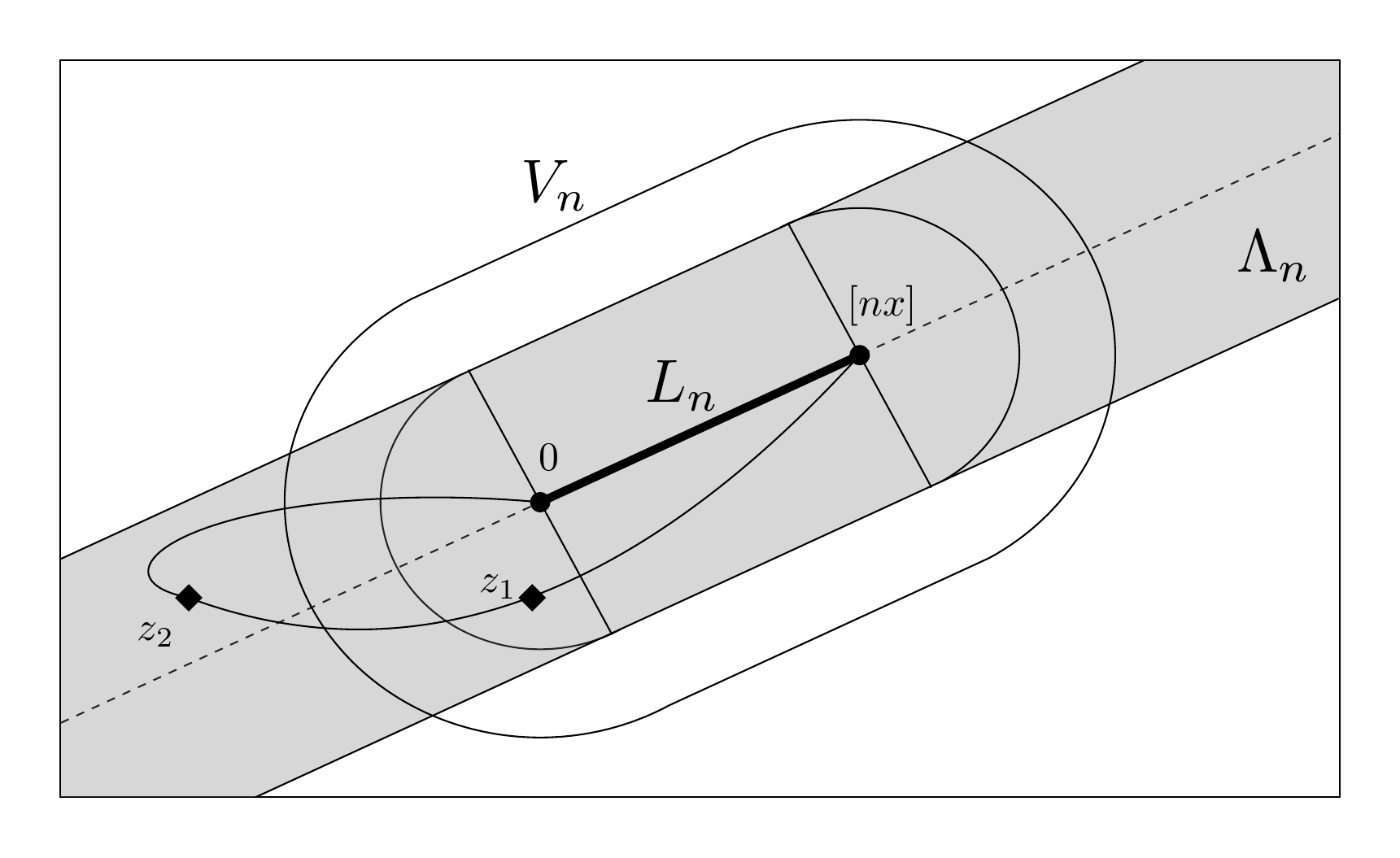}
\caption{The geodesic connecting $0$ and $[nx]$ remains inside $\Lambda_n$ but exits and re-enters $V_n$.
The point $z_2$ is outside $V_n$ but has a shorter passage time to $0$ than does $z_1$, which is within distance $n^{3/4+\delta}$ of $0$.} \label{fpp_fig}
\end{figure}

Note that \eqref{fpp_assumption_2b} is implied by $\E(X_e^{1/2}) < \infty$ and thus also by $\E(X_e^\lambda)<\infty$ for $\lambda>3/2$.
From Lemma \ref{fpp_lemma} we can find $\delta > 0$ and $\rho \in (0,1)$ such that \eqref{sum_n_bd_1} holds.
As in the proof of Theorem \ref{fpp_thm}, for each edge $e \in  E(\Z^2)$ we define 
\eq{
X_e' \coloneqq \min(X_e,X_e^{(1)},\dots,X_e^{(m)}), \qquad Z_e \coloneqq X_e-X_e', \qquad W_e \coloneqq 1-\e^{-Z_e}.
}
When considering geodesics between $0$ and $[nx]$, we always choose a distinguished geodesic \linebreak $(e_1,\dots,e_N)$ according some deterministic rule.
As in the proof of Theorem~\ref{fpp_thm}, we take $m$ large enough and $c>0$ small enough that 
\eq{
\P\givenp[\bigg]{\sum_{i=1}^N W_{e_i} \leq cn}{\sigma(X)}\one_{(E_n^0)^\mathrm{c}} \leq \frac{\e^n}{3^n} \quad \text{for all $n\geq1$.}
}
Let $F_n$ be the event that $\sum_{i=1}^N W_{e_i} \leq cn$ (here we have fixed the endpoints, and so this event is different from the $F_n$ considered in the proof of Theorem \ref{fpp_thm}). 
By the above display and \eqref{sum_n_bd_1}, there is $n_0$ such that
\eeq{ \label{n_assumption_1}
\P(F_n) \leq \frac{q_0}{2} \quad \text{for all $n\geq n_0$}.
}

\textbf{Part 2.}
Now we set
\eq{
\eps \coloneqq \alpha n^{-7/8-\delta},
}
where $\alpha$ will be chosen below, and define the perturbed edge weights as in \eqref{new_X}:
For each edge $e$ with both endpoints in $V_n$, we let
\eq{
\wt X_e = \begin{cases}
X_e' &\text{if } Y_e = 1, \\
X_e &\text{if } Y_e = 0,
\end{cases}
\quad \text{where} \quad
Y_e \stackrel{\text{i.i.d.}}{\sim} \mathrm{Bernoulli}(\eps).
}
Denote by $\wt T_n$ be the passage time from $0$ to $[nx]$ if $X_e$ is replaced by $\wt X_e$ whenever $e$ has both endpoints in $V_n$.
Before proceeding, let us note that by Lemma \ref{tv_lemma},
\eq{
d_{\tv}(\l_T,\l_{\wt T}) \leq C\alpha n^{-7/8-\delta}\sqrt{\#(\text{edges in $V_n$})} \leq C\alpha n^{-7/8-\delta}\sqrt{Cn^{7/4+2\delta}} = C\alpha,
}
where $C$ depends only on $\l_X$ and $m$.
We can then take $\alpha$ sufficiently small that
\eeq{ \label{alpha_choice}
d_{\tv}(\l_T,\l_{T'}) \leq \frac{q_0}{8}.
}
We will also assume
\eeq{ \label{n_assumption_2}
\frac{\alpha c}{2}n^{1/8-\delta} \geq -\log\Big(\frac{q_0}{4}\Big).
}
Let $(e_1,\dots,e_N)$ be the distinguished geodesic from $0$ to $[nx]$, which lies entirely inside $V_n$ for all large $n$ provided $G_2$ occurs.
In this case, as in the proof of Theorem~\ref{fpp_thm},
\eq{
T_n - \wt T_n\geq \sum_{i=1}^N \one_{\{Y_{e_i}=1\}}Z_{e_i} \eqqcolon D,
}
where the $Y_{e_i}$'s are i.i.d.~Bernoulli($\eps$) random variables that are independent of $\sigma(X,X^{(1)},\dots,X^{(m)})$.
So on the event $F_n^\mathrm{c} \cap G_2$, for any $t>0$,
\eq{
\P\givenp{D\leq tn^{1/8-\delta}}{\sigma(X,X^{(1)},\dots,X^{(m)})}\one_{F_n^\mathrm{c} \cap G_2}
&\leq \e^{tn^{1/8-\delta}}\E\givenp{\e^{-D}}{\sigma(X,X^{(1)},\dots,X^{(m)})}\one_{F_n^\mathrm{c} \cap G_2} \\
&= \one_{F_n^\mathrm{c} \cap G_2}\e^{tn^{1/8-\delta}}\prod_{i=1}^N (1-\eps + \eps \e^{-Z_{e_i}}) \\
&\leq \one_{F_n^\mathrm{c} \cap G_2}\e^{tn^{1/8-\delta}}\prod_{i=1}^N \exp\{-\eps(1-\e^{-Z_{e_i}})\} \\
&=\one_{F_n^\mathrm{c} \cap G_2}\e^{tn^{1/8-\delta}}\exp\bigg\{-\eps\sum_{i=1}^N W_{e_i}\bigg\} \\
&\leq \e^{tn^{1/8-\delta}-\alpha c n^{1/8-\delta}}.
}
Choosing $t = \alpha c/2$, we find that
\eq{
\P\Big(T_n - \wt T_n \leq \frac{\alpha c}{2} n^{1/8-\delta}\Big) &\stackrel{\phantom{\mbox{\footnotesize{\eqref{n_assumption_1},\eqref{n_assumption_2}}}}}{\leq} \P(F_n \cup G_2^\mathrm{c})+\e^{-\frac{\alpha c}{2} n^{1/8-\delta}} \\
&\stackrel{\mbox{\footnotesize{\eqref{n_assumption_1},\eqref{n_assumption_2}}}}{\leq} \frac{q_0}{2} + 1 - q_0 + \frac{q_0}{4}
= 1 - \frac{q_0}{4}.
}
Together with \eqref{alpha_choice} and Lemma \ref{fluctuation_lemma}, this completes the proof.

\subsection{Proof of Theorem \ref{lpp_thm}}  We begin with a lemma that will serve a similar purpose as Lemma \ref{fpp_lemma} did in the proof of Theorem \ref{fpp_thm}.

\begin{lemma} \label{lpp_lemma}
Consider directed site percolation on $\Z^2_+$ in which each site is open independently with probability $p < \vec p_{c,\,\mathrm{site}}(\Z^2)$.
Given $\rho > 0$, let $E_n$ be the event that exists a directed path $(v_0,v_1,\dots,v_n)$ with $\|v_0\|_1 \leq n$, such that
\eq{
\#\{1 \leq i \leq n : v_i \mathrm{\ closed}\} < \rho n.
}
Then there is $\rho$ sufficiently small that for some $a,b>0$,
\eq{
\P(E_n) \leq a\e^{-bn} \quad \text{for all $n\geq1$.}
}
\end{lemma}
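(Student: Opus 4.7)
The plan is to adapt the strategy of Kesten's Theorem 1 from \cite{kesten80} (which was used in Lemma \ref{fpp_lemma}) to the directed site percolation setting via a sprinkling argument. The naive union bound over the $2^n$ directed paths of length $n$ yields $(2p)^n$, which decays only when $p<1/2$; since $\vec{p}_{c,\mathrm{site}}(\Z^2)$ can exceed $1/2$, the subcriticality hypothesis must be exploited more substantively than by a direct path count.

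Fix any $p' \in (p,\vec{p}_{c,\mathrm{site}}(\Z^2))$, and set $q \coloneqq (p'-p)/(1-p) \in (0,1)$.  I would couple the given percolation $\omega$ to an enhanced percolation $\omega'$ in which each originally closed site is independently re-opened with probability $q$; then each site is open in $\omega'$ with probability $p'$, keeping $\omega'$ subcritical.  By exponential decay of connectivities in subcritical directed percolation (Menshikov--Aizenman--Barsky), combined with a union bound over the $O(n)$ possible endpoints of a length-$n$ directed path, there exist $C,\mu > 0$ depending only on $p'$ such that
\[
\P(\text{there is an open directed path from $v$ of length $n$ in $\omega'$}) \leq C\e^{-\mu n}
\]
for every $v \in \Z_+^2$.

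Next, suppose that in $\omega$ there is a directed path $\gamma$ of length $n$ starting at a fixed $v_0$ with fewer than $\rho n$ closed sites. Conditional on $\omega$, the sprinkling opens all of those closed sites simultaneously with probability at least $q^{\rho n}$, which renders $\gamma$ fully open in $\omega'$.  Consequently,
\[
\P\bigl(\exists\ \text{path from $v_0$ of length $n$ with $<\rho n$ closed sites in $\omega$}\bigr) \cdot q^{\rho n} \leq C\e^{-\mu n}.
\]
Choosing $\rho$ small enough that $\rho \log(1/q) < \mu$ produces exponential decay from each fixed starting point, and a union bound over the $O(n^2)$ starting points $v_0 \in \Z_+^2$ with $\|v_0\|_1 \leq n$ completes the argument.

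The main technical point I expect to address is the application of exponential decay to the \emph{existence} of any length-$n$ open directed path (as opposed to a connection to a specified terminal vertex) for subcritical directed site percolation on $\Z_+^2$.  This follows from the standard two-point bound together with a union bound over the at most $n+1$ possible endpoints on the anti-diagonal $\{v_0 + (a,b) : a+b = n\}$, though one could alternatively apply a block renormalization to reduce the statement to the regime of effective density below $1/2$, where the trivial bound over paths suffices.
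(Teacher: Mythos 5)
Your argument is correct, but it follows a genuinely different route from the paper. You use a sprinkling coupling: re-open each closed site with probability $q=(p'-p)/(1-p)$ so that the enhanced configuration $\omega'$ has density $p'$ still below $\vec p_{c,\,\mathrm{site}}(\Z^2)$, observe that a length-$n$ path with fewer than $\rho n$ closed sites becomes fully open in $\omega'$ with conditional probability at least $q^{\rho n}$, and then play this against exponential decay of the probability of a length-$n$ open directed path in the subcritical phase, choosing $\rho\log(1/q)<\mu$; a polynomial union bound over the $O(n^2)$ starting points and (if you go through the two-point function) the $n+1$ possible endpoints is harmless. The paper instead uses a block/coarse-graining argument at the original parameter $p$: it reduces to paths of length $nk$ from a fixed start, decomposes each such path by its skeleton of sites at levels $k,2k,\dots,nk$ (there are $(k+1)^n$ skeletons), uses the Griffeath/Durrett--Liggett exponential decay only to make the probability of a fully open length-$k$ block at most $1/(36(k+1)^2)$, and then beats the skeleton entropy with a binomial/Stirling estimate, taking $\rho=1/(4k)$. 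The key external input is the same in both proofs (exponential decay of the length-$k$ open-path probability throughout the subcritical regime); your version needs it at the sprinkled density $p'>p$, which is covered by the same citations, while the paper needs it only at $p$ itself. Your route is shorter and avoids the skeleton counting; the paper's is closer in spirit to the Kesten-type estimate used in its FPP lemma and keeps everything at the original parameter. One small point to make explicit if you write it up: on the event in question select one admissible path measurably from $\omega$ (finitely many candidates, so this is trivial) before applying the conditional sprinkling bound, and note that whether $v_0$ itself is required to be open only affects constants.
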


\begin{proof}
First observe that by a union bound,
\eq{
\P(E_n) \leq \frac{(n+1)(n+2)}{2}\P(E_n^0),
}
where $E_n^0$ is the event that there exists a directed path of length $n$ starting at the origin and passing through fewer than $\rho n$ closed sites.
If we can prove $\P(E_n^0) \leq a\e^{-bn}$ for some $a,b>0$, then it will follow that $\P(E_n) \leq a'\e^{-b'n}$ for some $a',b'>0$.
Therefore, we henceforth concern ourselves only with the event $E_n^0$.

For a directed path $\vec\gamma = (\gamma(0),\gamma(1),\dots,\gamma(\ell))$, let $|\vec\gamma| = \ell$ denotes its length.
Let $A_k$ be the event that there exists an open directed path of length $k$ starting at the origin.
Since $p < \vec p_{c,\,\mathrm{site}}(\Z^2)$, \cite[Theorem 7]{griffeath80} (see also \cite[Theorem 14]{durrett-liggett81}) guarantees the existence of $c_1,c_2>0$ such that
\eq{
\P(A_k) \leq c_1\e^{-c_2k} \quad \text{for all $k\geq1$.}
}
Choose $k$ large enough that
\eeq{ \label{choosing_k}
\P(A_k) \leq \frac{1}{36(k+1)^2},
}
and then set $\rho \coloneqq 1/(4k)$. 
Let $F_n$ be the event that some directed path of length $nk$ starting at the origin passes through fewer than $n/2$ closed sites.
Since $\rho(n+1)k = (n+1)/4 \leq n/2$ for any $n\geq 1$, we have the following containments for $n\geq1$ and $0\leq j<k$:
\eq{
E_{nk+j}^0 &= \{\exists\ \vec\gamma,\ \vec\gamma(0)=0,\ |\vec\gamma| = nk+j, \text{ with fewer than $\rho(nk+j)$ closed sites}\} \\
&\subset \{\exists\ \vec\gamma,\ \vec\gamma(0)=0,\ |\vec\gamma| = nk, \text{ with fewer than $\rho(n+1)k$ closed sites}\} \\
&\subset \{\exists\ \vec\gamma,\ \vec\gamma(0)=0,\ |\vec\gamma| = nk, \text{ with fewer than $n/2$ closed sites}\} = F_n.
}
It suffices, then, to obtain a bound of the form $\P(F_n) \leq a\e^{-bn}$.
The remainder of the proof is to achieve such an estimate.

Consider the set
\eq{
\Lambda_n \coloneqq \{\vc w = (w_0=0,w_1,\cdots,w_n)\ |\ \forall\ i=1,\dots,n,\ \exists\ \vec \gamma : w_{i-1}\to w_i \text{ with } |\vec\gamma| = k\}.
}
In words, $\Lambda_n$ is the set of all $(n+1)$-tuples whose $i^\text{th}$ coordinate is $ik$ steps from the origin, and for which there exists a directed path passing through all its coordinates.
Since a directed path of length $\ell$ starting at a fixed position must terminate at one of exactly $\ell+1$ vertices, the cardinality of $\Lambda_n$ is
\eeq{ \label{counting_Lambda}
|\Lambda_n| = (k+1)^n.
}
Recall that $\vec\Gamma_{nk}$ denotes the set of directed paths of length $nk$ starting at the origin.
For each $\vc w \in \Lambda_n$, let $\vec\Gamma_{\vc w}$ denote the subset of those paths traversing the coordinates of $\vc w$:
\eq{
\vec\Gamma_{\vc w} \coloneqq \{\vec\gamma \in \vec\Gamma_{nk} : \vec\gamma(ik) = w_i, 1\leq i\leq n\}.
}
From the definitions, we have $\vec\Gamma_{nk} = \bigcup_{\vc w \in \Lambda_n} \vec\Gamma_{\vc w}$.
Moreover, if we define $F_{\vc w}$ to be the event that some $\vec\gamma\in\vec\Gamma_{\vc w}$ has fewer than $n/2$ closed sites, then
\eeq{ \label{event_partition}
F_n = \bigcup_{\vc w \in\Lambda_n} F_{\vc w}.
}
Fix any $\vc w \in \Gamma_n$.
For $1\leq i\leq n$, let $X_i$ denote the minimum number of closed sites in a directed path of length $k$ starting at $w_{i-1}$.
It is immediate from translation invariance that $\P(X_i \geq 1) = 1-\P(A_k)$.
We thus have the estimate
\eq{
\P(F_{\vc w}) &\leq \P(X_1 + \cdots + X_n \leq n/2) \\
&\leq \P(\one_{\{X_1 \geq 1\}} + \cdots + \one_{\{X_n \geq 1\}} \leq n/2) \\
&= \sum_{i = 0}^{\floor{n/2}} {n \choose i} \big(1-\P(A_k)\big)^{i}\P(A_k)^{n-i} \\
&\leq \frac{n}{2} {n \choose \floor{n/2}} \P(A_k)^{n/2} \leq C\sqrt{\frac{n}{2\pi}}\Big(2\sqrt{\P(A_k)}\Big)^n,
}
where the final inequality holds for some $C>0$ by Stirling's approximation.
It now follows from \eqref{counting_Lambda}, \eqref{event_partition}, and \eqref{choosing_k} that
\eq{
\P(F_n) \leq C\sqrt{\frac{n}{2\pi}}\Big(2(k+1)\sqrt{\P(A_k)}\Big)^n \leq C\sqrt{\frac{n}{2\pi}}3^{-n} \leq a2^{-n}
}
for some $a>0$. 
\end{proof}

\begin{proof}[Proof of Theorem \ref{lpp_thm}] \hspace{1in}\\[-0.5\baselineskip]

\textbf{Part 1.}
For each $v \in \Z^2_+\setminus\{0\}$, define
\eq{
X_v' \coloneqq \max(X_v,X_v^{(1)},\dots,X_v^{(m)}), \qquad Z_v \coloneqq X_v' - X_v, \qquad W_v \coloneqq 1 - \e^{-Z_v},
}
where $m$ is chosen below, and $(X_v^{(j)})_{v\in\Z^2_+}$, $1\leq j\leq m$ are independent copies of the i.i.d.~vertex weights.
Recall that $S = \esssup X_v$.
If $S = \infty$, take $\delta = 1$ and choose $S'$ sufficiently large that
\eq{ 
\P(X_v \geq S' - 2\delta) < \vec p_{c,\,\mathrm{site}}(\Z^2).
}
If $S < \infty$, set $S' = S$ and choose $\delta > 0$ sufficiently small that the above display holds.
In either case, we can find $m$ sufficiently large that
\eq{
\P(\max(X_v^{(1)},\dots,X_v^{(m)}) < S'-\delta) < \vec p_{c,\,\mathrm{site}}(\Z^2) - \P(X_v \geq S' - 2\delta),
}
so that
\eq{
\P(Z_v < \delta)
&\leq \P(\max(X_v^{(1)},\dots,X_v^{(m)})< S'-\delta) + \P(X_v \geq S-2\delta) < \vec p_{c,\,\mathrm{site}}(\Z^2).
}
By Lemma \ref{lpp_lemma}, there is $\rho \in (0,1)$ and $a,b>0$ so that with probability at least $1 - a\e^{-b2^k}$, every directed path $(v_0,v_1,\dots,v_{2^k})$ of length $2^k$ with $\|v_0\|_1 = 2^k$ satisfies
\eq{
\sum_{i=1}^{2^k} W_{v_i} \geq \rho (1-\e^{-\delta})2^k.
}
Let $G$ be the event that this is the case for every $k \geq k_1$, where $k_1$ is chosen large enough that
\eeq{ \label{Gprime_bd_lpp}
\P(G) \geq 3/4.
}
We will assume $n$ is large enough to satisfy \eqref{n_vs_k}.

\textbf{Part 2.}
Similarly to \eqref{edge_eps_def}, we will take
\eq{
\eps_v \coloneqq \frac{\alpha}{\|v\|_1\sqrt{\log n}}, \quad v \in \Z^2_+ \setminus \{0\},
}
and define $\wt X_v$ as in \eqref{new_X} with $\eps = \eps_v$.
Let $T_n = T(0,y_n)$ be the passage time with the $X_v$'s as the vertex weights, and let $\wt T_n = \wt T(0,y_n)$ be the passage time with the $\wt X_v$'s.
The constant $\alpha > 0$ is taken small enough that \eqref{FPP_final_1} holds.

On the event $G$, every directed path $(0=v_0,v_1,\dots,v_n)$ of length $n$ satisfies
\eeq{ \label{Gprime_consequence}
\sum_{i=1}^n \eps_{v_i}W_{v_i} 
\geq \sum_{i=2^{k_1}}^n \eps_{v_i}W_{v_i} 
&\stackrel{\phantom{\mbox{\footnotesize\eqref{n_vs_k}}}}{\geq} \sum_{k=k_1}^{\floor{\log_2 n}-1} \sum_{i=2^{k}+1}^{2^{k+1}} \eps_{v_i}W_{v_i} \\
&\stackrel{\phantom{\mbox{\footnotesize\eqref{n_vs_k}}}}{\geq} \sum_{k=k_1}^{\floor{\log_2 n}-1} \frac{\alpha}{2^{k+1}\sqrt{\log n}} \sum_{i=2^{k}+1}^{2^{k+1}} W_{v_i} \\
&\stackrel{\phantom{\mbox{\footnotesize\eqref{n_vs_k}}}}{\geq} \frac{\alpha}{\sqrt{\log n}}\sum_{k=k_1}^{\floor{\log_2 n}-1}\frac{\rho(1-\e^{-\delta})2^k}{2^{k+1}} \\
&\stackrel{\phantom{\mbox{\footnotesize\eqref{n_vs_k}}}}{=} \frac{\alpha \rho(1-\e^{-\delta})(\floor{\log_2 n}-k_1)}{2\sqrt{\log n}} \\
&\stackrel{\mbox{\footnotesize\eqref{n_vs_k}}}{\geq} \frac{\alpha \rho(1-\e^{-\delta})\sqrt{\log n}}{4\log 2} \eqqcolon \theta\sqrt{\log n}. \raisetag{5\baselineskip}
}
The argument is now completed by proceeding exactly as in the proof of Theorem \ref{fpp_thm} following \eqref{G_consequence}, where \eqref{Gprime_bd} and \eqref{G_consequence} are replaced by \eqref{Gprime_bd_lpp} and \eqref{Gprime_consequence}, respectively.
\end{proof}

\subsection{Proof of Theorem \ref{dp_thm}}
We will absorb the inverse temperature $\beta$ into the $X_v$'s and then work in the case $\beta = 1$.
Let the notation be as in the proof of Theorem \ref{lpp_thm}.
In addition, let $\wt H_n$ and $\wt Z_n$ be the Hamiltonian and partition function, respectively, in the environment formed by the $\wt X_v$'s.
Now \eqref{FPP_final_1} reads as
\eeq{
d_{\tv}(\l_{\log Z_n},\l_{\log \wt Z_n}) \leq \frac{1}{4}.\label{dp_final_1}
}
We repeat all steps of the proof of Theorem \ref{lpp_thm} and take $n$ sufficiently large that on the event $G$ defined therein, 
\eeq{ \label{good_pathwise}
\P\givenp[\Big]{\wt H_n(\vec\gamma)-H_n(\vec\gamma) \geq \frac{\theta}{2}\sqrt{\log n}}{\sigma(X,X^{(1)},\dots,X^{(m)})} \geq \frac{3}{4}\one_G
}
for \textit{every} $\vec\gamma\in\vec\Gamma_n$.
(This is in analogy with \eqref{single_path}, but for $n$ satisfying a more restrictive lower bound than \eqref{n_easy}.)
The remainder of the argument must be slightly modified to account for the fact that all paths contribute to the free energy, not just those with maximum weight.

For each $\vec\gamma\in\Gamma_n$, define
\eq{
D_{\vec\gamma} \coloneqq \begin{cases} 
\frac{\theta}{2}\sqrt{\log n} &\text{if } \wt H_n(\vec\gamma)-H_n(\vec\gamma) \geq \frac{\theta}{2}\sqrt{\log n}, \\
0 &\text{otherwise}.
\end{cases}
}
From Jensen's inequality, It is immediate that
\eq{
\log \wt Z_n - \log Z_n = \log \sum_{\vec\gamma\in\vec\Gamma_n} \frac{\e^{H_n(\vec\gamma)}}{Z_n} \e^{\wt H_n(\vec\gamma)-H_n(\vec\gamma)}
&\geq \log \sum_{\vec\gamma\in\vec\Gamma_n} \frac{\e^{H_n(\vec\gamma)}}{Z_n} \e^{D_{\vec\gamma}}
\geq \sum_{\vec\gamma\in\vec\Gamma_n} \frac{\e^{H_n(\vec\gamma)}}{Z_n} D_{\vec\gamma}.
}
On one hand,
\eeq{ \label{lower_bound_expectation}
\E\bigg[\sum_{\vec\gamma\in\vec\Gamma_n} \frac{\e^{H_n(\vec\gamma)}}{Z_n} D_{\vec\gamma}\bigg] 
&\stackrel{\phantom{\mbox{\footnotesize\eqref{good_pathwise}}}}{=} \E\bigg[\sum_{\vec\gamma\in\vec\Gamma_n} \frac{\e^{H_n(\vec\gamma)}}{Z_n} \E\givenp[\big]{D_{\vec\gamma}}{\sigma(X,X^{(1)},\dots,X^{(m)})}\bigg] \\
&\stackrel{\mbox{\footnotesize\eqref{good_pathwise}}}{\geq} \E\bigg[\sum_{\vec\gamma\in\vec\Gamma_n} \frac{\e^{H_n(\vec\gamma)}}{Z_n} \one_G\frac{3\theta}{8}\sqrt{\log n}\bigg] \\
&\stackrel{\phantom{\mbox{\footnotesize\eqref{good_pathwise}}}}{=} \frac{3\theta}{8}\sqrt{\log n}\, \P(G) \stackrel{\mbox{\footnotesize{\eqref{Gprime_bd_lpp}}}}{\geq} \frac{9\theta}{32}\sqrt{\log n}.
}
On the other hand, we have the deterministic upper bound
\eq{
\sum_{\vec\gamma\in\vec\Gamma_n} \frac{\e^{H_n(\vec\gamma)}}{Z_n} D_{\vec\gamma} \leq \frac{\theta}{2}\sqrt{\log n}.
}
Therefore, the lower bound \eqref{lower_bound_expectation} can only hold if
\eq{
\P\Big(\log \wt Z_n - \log Z_n \geq \frac{\theta}{16}\sqrt{\log n}\Big) 
&\geq \P\bigg(\sum_{\vec\gamma\in\vec\Gamma_n} \frac{\e^{H_n(\vec\gamma)}}{Z_n} D_{\vec\gamma} \geq \frac{\theta}{16}\sqrt{\log n}\bigg) 
\geq \frac{1}{2}.
}
Together with \eqref{dp_final_1} and Lemma \ref{fluctuation_lemma}, this completes the proof.

\section{Acknowledgments}
We thank Antonio Auffinger, Francisco Arana Herrera, Si Tang, and Nhi Truong for useful conversations, and an anonymous referee for several corrections and valuable suggestions.


\bibliography{LB_fluctuations}

\end{document}